\theoremstyle{plain}
\newtheorem{theorem}{Theorem}[section]
\newtheorem{lemma}[theorem]{Lemma}
\newtheorem{corollary}[theorem]{Corollary}
\newtheorem{prop}[theorem]{Proposition}
\theoremstyle{definition}
\newtheorem{definition}[theorem]{Definition}
\newtheorem{remark}[theorem]{Remark}
\newtheorem*{lemma*}{Lemma}
\newtheorem*{theorem*}{Theorem}
\newtheorem*{proposition*}{Proposition}
\newtheorem*{corollary*}{Corollary}
\newtheorem*{definition*}{Definition}
\newcommand{\C}{\mathbb{C}}
\newcommand{\R}{\mathbb{R}}
\newcommand{\cfct}{\mathbf{c}}
\DeclareMathOperator{\G}{G}
\newcommand{\curlH}[1]{\mathcal{H}#1}
\begin{document}
\title{Remarks on the inverse Cherednik--Opdam transform on the real line}
\author{Troels Roussau Johansen}
\address{Mathematisches Seminar, Chr.-Albrechts Universit{\"a}t zu Kiel, Ludewig--Meyn-Str. 4, D-24118 Kiel, Germany}

\email{johansen@math.uni-kiel.de}
\keywords{Hausdorff--Young, generalized translation, convolution, Cherednik--Opdam transform, real Paley--Wiener theorem}
\subjclass[2010]{Primary 33C67; secondary 43A15, 43A32}
\begin{abstract}
We  obtain Hausdorff--Young inequalities for the one-dimensional Cherednik--Opdam transform and its inverse, and we establish a real Paley--Wiener theorem for its inverse that generalizes an analogous result by N. B. Andersen for the Jacobi transform.
\end{abstract}

\maketitle
\section{Introduction}
The present paper is concerned with themes in classical harmonic analysis in the framework of the Cherednik--Opdam transform $\mathcal{H}$ on the real line. The starting point is the Hausdorff--Young inequality which, among other things, allows an extension of $\mathcal{H}$ from $L^2(\R,d\mu)$ to $L^p(\R,d\mu)$ for $1\leq p\leq 2$; the extension is denoted $\mathscr{H}_p$. Similar statements can be established for the inverse transform $\mathcal{I}=\mathcal{H}^{-1}$, which can also be extended from $L^2(\R,d\nu)$ to a map $\mathscr{I}_q$ from $L^q(\R,d\nu)$ into $L^p(\R,d\mu)$. Our first result states that the inverse of the extended map $\mathscr{H}_p$ coincides $\nu$-almost everywhere with $\mathscr{I}_{p^\prime}$. This result was recently established for commutative hypergroups in \cite{HY-hyper} but the `Cherednik--Opdam convolution' on $\R$ does not give rise to a hypergroup structure so the result does not follow immediately. The strategy of proof is still mostly the same, however, as the hypergroup structure is not needed. More importantly, the interplay between $\mathcal{H}$ and the convolution product still persists. Notation and first results are collected in section \ref{section-CO-notation} whereas as the new results concerning the Hausdorff--Young theorems for $\mathcal{H}$ and $\mathcal{H}^{-1}$ appear in section \ref{section.HY}.
\medskip

The second topic pertains to Paley--Wiener theorem for $\mathcal{H}$. The classical Paley--Wiener theorem for the Fourier transform describes the image of the space $C_c^\infty(\R^n)$ of compactly supported smooth functions in $\R^n$ as a space of entire functions in $\C^n$ of a specific exponential growth rate. Similar statements hold for various classes of Lie groups and symmetric spaces, as described succintly in several books by S. Helgason, for example. In the closely related framework of Jacobi analysis (in which the Jacobi transform replaces the spherical transform on a Riemannian symmetric space of rank one), a Paley--Wiener theorem of the aforementioned type was obtained in \cite{Tom-Mogens}, for example. As already summarized in theorem \ref{thm.PW} there are analogous results for the Cherednik--Opdam transform associated with arbitrary rank root systems in $\R^n$. There are comparable results in the framework of Dunkl theory, but the literature is not nearly as coherent and some details still seem to be missing.

Recently a `dual' question has started to attract attention, namely, to describe those functions in $L^2$ whose Fourier transform, the Dunkl transform, or Helgason-Fourier transform are smooth and compactly supported. This amounts to a Paley--Wiener theorem for the inverse transform. For Riemannian symmetric spaces such results and closely related so-called \emph{real} Paley--Wiener theorems were obtained in \cite{Andersen-pacific} and \cite{Pasquale-pacific}, for the Jacobi transform in \cite{Andersen-JMAA}, for the one-dimensional Dunkl transform in \cite{Chettaoui-Trimeche} and \cite{Andersen-PW-Dunkl}, and recently for the Heckman--Opdam transform in $\R^n$ in \cite{Mejjaoli-Trimeche}. We complement the literature by proving comparable results for the one-dimensional (inverse) Cherednik--Opdam transform, following the approach in \cite{Andersen-PW-Dunkl} and \cite{Andersen-JMAA}. These matters are detailed in section \ref{section.PW}.

\section{Notation and first properties}\label{section-CO-notation}
We are concerned with harmonic analysis for the Cherednik--Opdam transform acting on functions on $\R$. As already indicated elsewhere this is a convenient extension of Jacobi analysis on $\R^+$. While Opdam gave a brief discussion of the rank one case in \cite{Opdam-acta}, we shall rely exclusively on the \cite{Anker-Ayadi-Sifi} since it provides a much more detailed investigation of the eigenfunctions, the convolution structure and aspects of harmonic analysis.

In the following we fix parameters $\alpha, \beta$ subject to the constraints $\alpha\geq\beta\geq-\frac{1}{2}$ and $\alpha>-\frac{1}{2}$. Let $\rho=\alpha+\beta+1$ and $\lambda\in\C$. The Opdam hypergeometric functions $\G_\lambda^{(\alpha,\beta)}$ on $\R$ are eigenfunctions $T^{(\alpha,\beta)}\G_\lambda^{(\alpha,\beta)}(x)=i\lambda\G_\lambda^{(\alpha,\beta)}(x)$ of the differential-difference operator
\[T^{(\alpha,\beta)}f(x)= f'(x)+\Bigl[(2\alpha+1)\coth x+(2\beta+1)\tanh x\Bigr]\frac{f(x)-f(-x)}{2}-\rho f(-x)\]
that are normalized such that $G_\lambda^{(\alpha,\beta)}(0)=1$. In the notation of Cherednik one would write $T^{(\alpha,\beta)}$ as
\[T(k_1,k_2)f(x)= f'(x)+\Bigl\{\frac{2k_1}{1+e^{-2x}}+\frac{4k_2}{1-e^{-4x}}\Bigr\}(f(x)-f(-x))-(k_1+2k_2)f(x),\]
with $\alpha=k_1+k_2-\frac{1}{2}$ and $\beta=k_2-\frac{1}{2}$. Here $k_1$ is the multiplicity of a simply positive root and $k_2$ the (possibly vanishing) multiplicity of a multiple of this root. By \cite[page~90]{Opdam-acta} or \cite[formula~1.2]{Anker-Ayadi-Sifi}, the eigenfunction $\G_\lambda$ is given by
\[
\G_\lambda^{(\alpha,\beta)}(t)=\varphi_\lambda^{(\alpha,\beta)}(t)-\frac{1}{\rho-i\lambda}\frac{\partial}{\partial x}\varphi_\lambda^{(\alpha,\beta)}(t) = \varphi_\lambda^{(\alpha,\beta)}(t)+\frac{\rho}{4(\alpha+1)}\sinh(2t)\varphi_\lambda^{(\alpha+1,\beta+1)}(t),\]
where $\varphi_\lambda^{(\alpha,\beta)}(x)={_2}F_1(\frac{\rho+i\lambda}{2},\frac{\rho-i\lambda}{2};\alpha+1;-\sinh^2x)$ is the classical Jacobi function.

\begin{definition} The associated Laplace operator is defined by $\mathcal{L}=(T^{(\alpha,\beta)})^2$.
\end{definition}
The operator $\mathcal{L}$ is essentially self-adjoint on $L^2(\R,d\mu)$, where the measure $d\mu$ is defined by $d\mu(x)=A_{\alpha,\beta}(|x|)\,dx$ and $A_{\alpha,\beta}(x)=(\sinh x)^{2\alpha+1}(\cosh x)^{2\beta+1}$. 

\begin{remark}
It is important to point out that the functions $\G_\lambda$ are \textbf{not} the so-called Jacobi--Dunkl functions considered, for example, in \cite{BenSalem-JD}, the latter function being defined as eigenfunctions for the operator $\widetilde{T}^{(\alpha,\beta)}$ defined by $\widetilde{T}^{(\alpha,\beta)}f(x)=T^{(\alpha,\beta)}f(x)+\rho f(-x)$.
\end{remark}

According to \cite[Theorem~3.2]{Anker-Ayadi-Sifi} there exists a family of signed measures $\mu_{x,y}^{(\alpha,\beta)}$ such that the product formula
\[\G_\lambda^{(\alpha,\beta)}(x)\G_\lambda^{(\alpha,\beta)}(y)=\int_\R\G_\lambda^{(\alpha,\beta)}(z)\,d\mu_{x,y}^{(\alpha,\beta)}(z)\]
holds for all $x,y\in\R$ and $\lambda\in\C$, where
\[d\mu_{x,y}^{(\alpha,\beta)}(z)=\begin{cases} \mathcal{K}_{\alpha,\beta}(x,y,z)A_{\alpha,\beta}(\vert z\vert)dy&\text{if } xy\neq 0\\
d\delta_x(z)&\text{if } y=0\\
d\delta_y(z)&\text{if } x=0
\end{cases}\] 
and 
\begin{multline*}
\mathcal{K}_{\alpha,\beta}(x,y,z) = M_{\alpha,\beta}\vert\sinh x\cdot\sinh y\cdot\sinh z\vert^{-2\alpha}\int_0^\pi g(x,y,z,\chi)_+^{\alpha-\beta-1}\\
\times\Bigl[1-\sigma_{x,y,z}^\chi+\sigma_{x,z,y}^\chi+\sigma_{z,y,x}^\chi+\frac{\rho}{\beta+\frac{1}{2}}\coth x\cdot \coth y\cdot \coth z (\sin \chi)^2\Bigr]\times (\sin \chi)^{2\beta}\,d\chi
\end{multline*}
if $x,y,z\in\R\setminus\{0\}$ satisfy the triangular inequality $\vert\vert x\vert-\vert y\vert\vert < \vert z\vert < \vert x\vert+\vert y\vert$, and $\mathcal{K}_{\alpha,\beta}(x,y,z)=0$ otherwise. Here 
\[\sigma_{x,y,z}^\chi = \begin{cases}
\frac{\cosh x\cdot \cosh y-\cosh z\cdot\cos\chi}{\sinh x\cdot \sinh y}&\text{if } xy\neq 0\\
0&\text{if } xy=0\end{cases} \quad \text{ for } x,y,z\in\R, \chi\in[0,1]\]
and
$g(x,y,z,\chi)=1-\cosh^2x-\cosh^2y\cosh^2z+2\cosh x\cdot\cosh y\cdot\cosh z\cdot\cos\chi$. 

The product formula is used to obtain explicit estimates for the generalized translation operators
\[\tau_x^{(\alpha,\beta)}f(y)=\int_\R f(z)\,d\mu_{x,y}^{(\alpha,\beta)}(z),\]
as well as the convolution  product of suitable functions $f,g$ on $\R$ defined by
\[f\star g(x)=\int_\R\tau_x^{(\alpha,\beta)}f(-y)g(y)A_{\alpha,\beta}(\vert y\vert)\,dy.\]

Since the convolution of functions that are not even is permitted and produces a function on $\R$ that is not an even function either it is clear that the Jacobi transform is inadequate for further studies. To this end we need following transform:

\begin{definition}
Let $\alpha\geq\beta\geq-\frac{1}{2}$ with $\alpha>-\frac{1}{2}$. The \emph{Cherednik--Opdam transform} of $f\in C_c(\R)$ is defined by
\[\curlH{f}(\lambda)=\int_\R f(x)\G_\lambda^{(\alpha,\beta)}(-x)A_{\alpha,\beta}(\vert x\vert)\,dx\text{ for all } \lambda\in\C.\]
\end{definition}
The inverse transform is given as 
\[\mathcal{J}g(x)=\int_\R g(\lambda)\G_\lambda^{(\alpha,\beta)}(x)\left(1-\frac{\rho}{i\lambda}\right)\,\frac{d\lambda}{8\pi\vert\cfct_{\alpha,\beta}(\lambda)\vert^2},\]
where one should take note of the asymmetry of the formulae compared with the inversion formula for the Jacobi transform; the factor $``\bigl(1-\frac{\rho}{i\lambda}\bigr)$'' is not present in Jacobi analysis. The $\cfct$-function that appears is the same as in Jacobi analysis, however:
\[\cfct_{\alpha,\beta}(\lambda)=\frac{\Gamma(2\alpha+1)}{\Gamma(\alpha+\frac{1}{2})}\frac{\Gamma(i\lambda)}{\Gamma(\alpha-\beta+i\lambda)} \frac{\Gamma\left(\frac{\alpha-\beta+i\lambda}{2}\right)}{\Gamma\left(\frac{\rho+i\lambda}{2}\right)} = \frac{\Gamma(\alpha+1)2^{\rho-i\lambda}\Gamma(i\lambda)}{\Gamma\left(\frac{\rho+i\lambda}{2}\right)\Gamma\left(\frac{\alpha-\beta+1+i\lambda}{2}\right)}.\]

According to \cite[Lemma~4.1]{Anker-Ayadi-Sifi} there is a close relation between $\mathcal{H}$ and the Jacobi transform $\mathcal{F}_{\alpha,\beta}$, expressed by the identity
\[\curlH{f}(\lambda) = 2\mathcal{F}_{\alpha,\beta}(f_e)(\lambda)+2(\rho+i\lambda)\mathcal{F}_{\alpha,\beta}(Jf_o)(\lambda) \text{ for }\lambda\in\C \text{ and } f\in C_c(\R),\]
where $f_e$ is the even part of $f$, $f_o$ the odd part, and $Jf_o(x):=\int_{-\infty}^xf_o(t)\,dt$.

The corresponding Plancherel formula was established in \cite[Theorem~9.13(3)]{Opdam-acta}, to the effect that
\[\begin{split}
\int_\R\vert f(x)\vert^2 A_{\alpha,\beta}(\vert x\vert)\,dx &=\int_0^\infty \bigl(\vert\curlH{f}(\lambda)\vert^2+\vert\curlH{\check{f}}(\lambda)\vert^2\bigr) \frac{d\lambda}{16\pi\vert\cfct_{\alpha,\beta}(\lambda)\vert^2}\\
&=\int_\R \curlH{f}(\lambda)\overline{\curlH{\check{f}}(-\lambda)}\left(1-\frac{\rho}{i\lambda}\right)\frac{d\lambda}{8\pi\vert\cfct_{\alpha,\beta}(\lambda)\vert^2},
\end{split}\]
where $\check{f}(x):=f(-x)$. We also note that it follows from the defining identity $T^{(\alpha,\beta)}\G_\lambda(x)=i\lambda\G_\lambda(x)$ that $\mathcal{L}\G_\lambda(x)=-\lambda^2\G_\lambda(x)$, and therefore -- as will be seen in the course of the proof of lemma \ref{gen.inv} below -- 
\begin{equation}\label{eqn.eigen}
\mathcal{H}(\mathcal{L}f)(\lambda)=-\lambda^2\mathcal{H}f(x)\quad,\quad f\in C_c^\infty(\R).
\end{equation}
Note that -- contrary to Jacobi analysis -- there is no $\rho$-shift, as the $\rho$ has already been included in the definition of $T^{(\alpha,\beta)}$. More generally, we record the following useful identity.

\begin{lemma}\label{gen.inv}
Let $f$ be a smooth function on $\R$ with the property that $\mathcal{L}^nf$ belongs to $L^2(\R,d\mu)$ for every $n\in\mathbb{N}_0$. Then
\[\int_\R|\mathcal{L}^nf(x)|^2\,d\mu(x) = \int_\R |\lambda|^{4n}|\mathcal{H}f(\lambda)|^2\,d\nu(\lambda)\text{ for every } n\in\mathbb{N}_0.\]
\end{lemma}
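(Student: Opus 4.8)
The plan is to deduce the identity from the Plancherel isometry combined with the spectral relation \eqref{eqn.eigen}, so that the only genuine work is to establish \eqref{eqn.eigen} and its iterates. The single nonformal input is that the kernel of $\mathcal{H}$ is built from $\mathcal{L}$-eigenfunctions. Let $R$ denote the reflection $Rh(x)=h(-x)$. A short computation with the explicit expression for $T^{(\alpha,\beta)}$ — the derivative term and the coefficient $(2\alpha+1)\coth x+(2\beta+1)\tanh x$ are odd, so only the term $-\rho f(-x)$ survives the symmetrization — gives $T^{(\alpha,\beta)}R+RT^{(\alpha,\beta)}=-2\rho\,\mathrm{Id}$, whence $\mathcal{L}R=(T^{(\alpha,\beta)})^2R=R(T^{(\alpha,\beta)})^2=R\mathcal{L}$. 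Since $T^{(\alpha,\beta)}\G_\lambda^{(\alpha,\beta)}=i\lambda\G_\lambda^{(\alpha,\beta)}$ yields $\mathcal{L}\G_\lambda^{(\alpha,\beta)}=-\lambda^2\G_\lambda^{(\alpha,\beta)}$, applying $R$ shows that $x\mapsto\G_\lambda^{(\alpha,\beta)}(-x)$ is again an eigenfunction of $\mathcal{L}$ for the eigenvalue $-\lambda^2$.

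Next I would prove \eqref{eqn.eigen} for $f\in C_c^\infty(\R)$ by transferring $\mathcal{L}$ onto the kernel. Since $\mathcal{L}$ is symmetric on $C_c^\infty(\R)\subset L^2(\R,d\mu)$ (indeed essentially self-adjoint, as recorded above), integrating by parts twice over the compact support of $f$, so that all boundary terms vanish, gives
\[\mathcal{H}(\mathcal{L}f)(\lambda)=\int_\R(\mathcal{L}f)(x)\,\G_\lambda^{(\alpha,\beta)}(-x)\,d\mu(x)=\int_\R f(x)\,\bigl(\mathcal{L}\,[\G_\lambda^{(\alpha,\beta)}(-\,\cdot\,)]\bigr)(x)\,d\mu(x)=-\lambda^2\,\mathcal{H}f(\lambda).\]
Iterating, $\mathcal{H}(\mathcal{L}^nf)(\lambda)=(-\lambda^2)^n\mathcal{H}f(\lambda)$, and therefore $|\mathcal{H}(\mathcal{L}^nf)(\lambda)|^2=|\lambda|^{4n}|\mathcal{H}f(\lambda)|^2$ for every $n\in\mathbb{N}_0$.

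The lemma is then immediate from the Plancherel theorem, that is, from the isometry $\|g\|_{L^2(\R,d\mu)}=\|\mathcal{H}g\|_{L^2(\R,d\nu)}$ (the case $n=0$ of the asserted identity), applied to $g=\mathcal{L}^nf$, which lies in $L^2(\R,d\mu)$ by hypothesis:
\[\int_\R|\mathcal{L}^nf(x)|^2\,d\mu(x)=\int_\R|\mathcal{H}(\mathcal{L}^nf)(\lambda)|^2\,d\nu(\lambda)=\int_\R|\lambda|^{4n}|\mathcal{H}f(\lambda)|^2\,d\nu(\lambda).\]

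I expect the main obstacle to be upgrading \eqref{eqn.eigen} from $C_c^\infty(\R)$ to the class in the hypothesis, since $\G_\lambda^{(\alpha,\beta)}(-\,\cdot\,)$ is a generalized, non-$L^2$ eigenfunction and one cannot feed it directly into the Hilbert-space self-adjointness. I would resolve this either by approximation — choosing $f_k\in C_c^\infty(\R)$ with $\mathcal{L}^nf_k\to\mathcal{L}^nf$ in $L^2(\R,d\mu)$ for each $n$ and $\mathcal{H}f_k\to\mathcal{H}f$, and passing to the limit in the displays above — or, more structurally, by viewing $\mathcal{H}$ as the spectral transform diagonalizing the self-adjoint operator $\mathcal{L}$, so that $\mathcal{H}\mathcal{L}^n=(-\lambda^2)^n\mathcal{H}$ holds on $\mathrm{dom}(\mathcal{L}^n)$ and hence on $\bigcap_{n}\mathrm{dom}(\mathcal{L}^n)$, to which $f$ belongs.
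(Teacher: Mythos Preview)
Your proposal is correct and follows essentially the same arc as the paper: establish $\mathcal{H}(\mathcal{L}f)=-\lambda^2\mathcal{H}f$, iterate, and apply Plancherel to $\mathcal{L}^nf$. You are also right that the only genuine issue is upgrading \eqref{eqn.eigen} from $C_c^\infty(\R)$ to the class in the hypothesis, and your second suggested fix (view $\mathcal{H}$ as the spectral transform of the self-adjoint $\mathcal{L}$) is exactly what the paper implements, just phrased more concretely.

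The paper's version of that step is worth noting because it bypasses approximation entirely. Rather than trying to extend \eqref{eqn.eigen} to the given $f$, the paper keeps $f$ fixed and tests against an arbitrary $g\in C_c^\infty(\R)$: using self-adjointness of $\mathcal{L}$ and Plancherel one writes
\[
\langle\mathcal{H}(\mathcal{L}f),\mathcal{H}g\rangle_{2,\nu}
=\langle\mathcal{L}f,g\rangle_{2,\mu}
=\langle f,\mathcal{L}g\rangle_{2,\mu}
=\langle\mathcal{H}f,\mathcal{H}(\mathcal{L}g)\rangle_{2,\nu}
=\langle -(\cdot)^2\mathcal{H}f,\mathcal{H}g\rangle_{2,\nu},
\]
where in the last step \eqref{eqn.eigen} is only used for the \emph{compactly supported} test function $g$ (your integration-by-parts argument). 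Density of $\{\mathcal{H}g:g\in C_c^\infty(\R)\}$ then gives $\mathcal{H}(\mathcal{L}f)=-\lambda^2\mathcal{H}f$ for $\nu$-a.e.\ $\lambda$ directly for the given $f$. This is slicker than your first suggested route via $f_k\in C_c^\infty$ with $\mathcal{L}^nf_k\to\mathcal{L}^nf$ for every $n$, which would require a separate graph-norm density argument. Your explicit verification that $\mathcal{L}$ commutes with the reflection $R$ (so that $\G_\lambda(-\,\cdot\,)$ is again an $\mathcal{L}$-eigenfunction) is a detail the paper leaves implicit.
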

\begin{proof}
Choose $f$ as in the hypothesis and let $g\in C_c^\infty(\R)$. Since $\mathcal{L}$ is essentially self-adjoint on $L^2(\R,d\mu)$, it follows from the Plancherel theorem for $\mathcal{H}$ that
\[\begin{split}
\langle\curlH{(\mathcal{L}f)},\curlH{g}\rangle_{2,\nu} &= \langle \mathcal{L}f,g\rangle_{2,\mu} = \langle \curlH{F},\curlH{(\mathcal{L}g)}\rangle_{2,\nu}\\
&=\langle\curlH{f},-(\cdot)^2\curlH{g}\rangle_{2,\nu} = \langle -(\cdot)^2\curlH{f},\curlH{g}\rangle_{2,\nu},
\end{split}\]
that is, $\mathcal{H}(\mathcal{L}f)(\lambda)=-\lambda^2\curlH{f}(\lambda)$ for $\nu$-almost every $\lambda\in\R$. Therefore $\mathcal{H}(\mathcal{L}^nf)(\lambda)=(-1)^n\lambda^{2n}\curlH{f}(\lambda)$ for $\nu$-almost every $\lambda\in\R$ and every $n\in\mathbb{N}_0$. Since $\mathcal{L}^nf$ belongs to $L^2(\R,d\mu)$ for every $n$, the Plancherel theorem implies that
\[\bigl\|\mathcal{L}^nf\bigr\|^2_{2,\mu} = \bigl\|\mathcal{H}(\mathcal{L}^nf)\bigr\|^2_{2,\nu}=\int_\R|\lambda|^{4n}|\curlH{f}(\lambda)|^2\,d\nu.\]
\end{proof}

It is known from  \cite[Proposition~4.4]{Anker-Ayadi-Sifi} that $\tau_x^{(\alpha,\beta)}\G_\lambda^{(\alpha,\beta)}(y)=\G_\lambda^{(\alpha,\beta)}(x)\G_\lambda^{(\alpha,\beta)}(y)$ and $\curlH{\tau_x^{(\alpha,\beta)}f}(\lambda)=\G_\lambda^{(\alpha,\beta)}(x)\curlH{f}(\lambda)$ for $f\in C_c(\R)$, in addition to which $\curlH{(f\star g)}(\lambda)=\curlH{f}(\lambda)\cdot\curlH{g}(\lambda)$, cf. \cite[Proposition~4.9]{Anker-Ayadi-Sifi}.

\begin{prop}
Assume $1\leq p,q,r\leq\infty$ satisfy $\frac{1}{p}+\frac{1}{q}-1=\frac{1}{r}$. For every $f\in L^p(\R,d\mu)$ and $g\in L^q(\R,d\mu)$ the convolution product $f\star g$ belongs to $L^r(\R,d\mu)$ and $\|f\star g\|_{r,\mu}\leq C\|f\|_{p,\mu}\|g\|_{q,\mu}$, where $d\mu(x)=A_{\alpha,\beta}(x)dx$.
\end{prop}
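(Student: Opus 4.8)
The plan is to reduce the assertion to two endpoint estimates and then invoke the Riesz--Thorin interpolation theorem, the whole argument resting on the mapping properties of the generalized translation operator. The single analytic input I would isolate at the outset is the uniform bound
\[\|\tau_x^{(\alpha,\beta)}f\|_{p,\mu}\leq C\,\|f\|_{p,\mu}\qquad(1\leq p\leq\infty),\]
with $C$ independent of $x$, which I would extract from the explicit estimates for the kernel $\mathcal{K}_{\alpha,\beta}$ and the total-variation bounds on the signed measures $\mu_{x,y}^{(\alpha,\beta)}$ recorded in \cite{Anker-Ayadi-Sifi}. The same estimates, applied in the first rather than the second variable, yield the companion bound $\int_\R|\tau_x^{(\alpha,\beta)}f(-y)|^p\,d\mu(x)\leq C^p\|f\|_{p,\mu}^p$ uniformly in $y$, which the $L^1$-endpoint will require.

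Granting these, I would first dispose of the two extreme admissible configurations for a fixed $p$. For $q=p'$, where $r=\infty$, Hölder's inequality applied in the defining integral gives $|f\star g(x)|\leq\bigl(\int_\R|\tau_x^{(\alpha,\beta)}f(-y)|^p\,d\mu(y)\bigr)^{1/p}\|g\|_{p',\mu}$; since $d\mu$ is even the inner integral equals $\|\tau_x^{(\alpha,\beta)}f\|_{p,\mu}^p$, and the translation bound yields $\|f\star g\|_{\infty}\leq C\|f\|_{p,\mu}\|g\|_{p',\mu}$. For $q=1$, where $r=p$, I would instead use Minkowski's integral inequality to pull the $L^p(d\mu)$-norm in $x$ inside the integral, obtaining
\[\|f\star g\|_{p,\mu}\leq\int_\R\Bigl(\int_\R|\tau_x^{(\alpha,\beta)}f(-y)|^p\,d\mu(x)\Bigr)^{1/p}|g(y)|\,A_{\alpha,\beta}(|y|)\,dy\leq C\|f\|_{p,\mu}\|g\|_{1,\mu},\]
the last step being precisely the companion bound above.

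To fill the intermediate range I would fix $f\in L^p(\R,d\mu)$ and view convolution as the linear operator $T_f\colon g\mapsto f\star g$. The two endpoint estimates read $\|T_f\|_{L^1\to L^p}\leq C\|f\|_{p,\mu}$ and $\|T_f\|_{L^{p'}\to L^\infty}\leq C\|f\|_{p,\mu}$, so Riesz--Thorin interpolation gives $\|T_f\|_{L^{q_\theta}\to L^{r_\theta}}\leq C\|f\|_{p,\mu}$ for every $\theta\in[0,1]$, where $\tfrac{1}{q_\theta}=1-\tfrac{\theta}{p}$ and $\tfrac{1}{r_\theta}=\tfrac{1-\theta}{p}$. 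A direct computation gives $\tfrac1p+\tfrac1{q_\theta}-1=\tfrac1{r_\theta}$, and as $\theta$ runs over $[0,1]$ the pair $(q_\theta,r_\theta)$ sweeps out exactly the admissible exponents attached to this $p$ (namely $q\in[1,p']$, $r\in[p,\infty]$); since the operator norm is $C\|f\|_{p,\mu}$ throughout, this is the claimed bilinear inequality.

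The main obstacle is the very first step. Unlike the hypergroup or Riemannian symmetric space setting, the measures $\mu_{x,y}^{(\alpha,\beta)}$ are merely signed, so there is no probability normalization conferring contractivity of $\tau_x^{(\alpha,\beta)}$ for free; the uniform $L^p$-boundedness has to be derived from the pointwise absolute-value estimate for $\mathcal{K}_{\alpha,\beta}$, and one must pay particular attention to the symmetry of the kernel needed to transfer the bound from the second variable to the first for the $L^1$-endpoint. Once that input is secured, the remainder is the classical Young template.
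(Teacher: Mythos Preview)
The paper does not supply its own proof of this proposition; it is stated as background, the result being due to Anker--Ayadi--Sifi (specifically the Young inequality in \cite{Anker-Ayadi-Sifi}, built on their $L^p$-boundedness of the generalized translations). There is therefore no ``paper's proof'' to compare against in the strict sense.

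Your sketch is correct and is in fact the approach used in \cite{Anker-Ayadi-Sifi}. You have correctly isolated the single nontrivial input: the uniform bound $\|\tau_x^{(\alpha,\beta)}f\|_{p,\mu}\leq C\|f\|_{p,\mu}$ together with its companion in the other variable. These are established in \cite{Anker-Ayadi-Sifi} via explicit control of the kernel $|\mathcal{K}_{\alpha,\beta}(x,y,z)|$ by a positive Jacobi-type kernel, which in particular gives a uniform total-variation bound on the signed measures $\mu_{x,y}^{(\alpha,\beta)}$ and the symmetry needed to pass from one variable to the other. Your caveat about the signed measures is exactly the right point to flag: contractivity fails (hence the constant $C>1$), but uniform boundedness survives, and that is all the Young template needs. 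Once that input is granted, the two endpoints plus Riesz--Thorin give the full range, as you wrote; the exponent bookkeeping you carried out is correct and exhausts all admissible triples.
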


\begin{lemma}\label{lemma-G-estimate}
\begin{enumerate}[label=(\roman*)]
\item The function $\G_0$ is strictly positive and 
\[\G_0(x)\lesssim\begin{cases} (1+x)e^{-\rho x}&\text{if }x\geq 0\\ e^{\rho x}&\text{if }x\leq 0\end{cases}\]
\item For every $\lambda\in\R$ and $x\in\R$ it holds that $|\G_\lambda(x)|\leq\G_0(x)$.
\end{enumerate}
\end{lemma}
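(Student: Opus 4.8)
The plan is to reduce everything to the classical Jacobi function $\varphi_\lambda^{(\alpha,\beta)}$ by means of the two displayed identities for $\G_\lambda$, and then to import the relevant facts from Jacobi analysis. The most useful form is $\G_\lambda(x)=\varphi_\lambda(x)-\frac{1}{\rho-i\lambda}\varphi_\lambda'(x)$; at $\lambda=0$ this reads $\G_0(x)=\varphi_0(x)-\frac1\rho\varphi_0'(x)$. One checks this directly by splitting the eigenfunction equation $T^{(\alpha,\beta)}\G_0=0$ into even and odd parts: the odd part yields $g_o=-\frac1\rho g_e'$, and substituting back shows $g_e$ solves the Jacobi equation at $\lambda=0$, so $g_e=\varphi_0$ and $g_o=-\frac1\rho\varphi_0'$. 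The facts I would quote about $\varphi_0$ are: $\varphi_0^{(\alpha,\beta)}>0$ and non-increasing on $[0,\infty)$ (so $\varphi_0'\le 0$ there), the two-term asymptotics $\varphi_0(y)\asymp(1+y)e^{-\rho y}$ with the matching derivative bound $|\varphi_0'(y)|\lesssim(1+y)e^{-\rho y}$, and Koornwinder's estimate $|\varphi_\lambda^{(\alpha,\beta)}(x)|\le\varphi_0^{(\alpha,\beta)}(x)$ for real $\lambda$, the last coming from the Laplace-type representation of $\varphi_\lambda$ with a nonnegative kernel.

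For (i) on $x\ge 0$: since $\varphi_0'\le 0$ there, $\G_0(x)=\varphi_0(x)-\frac1\rho\varphi_0'(x)\ge\varphi_0(x)>0$, giving strict positivity, while the upper bound follows from $\varphi_0(x)\lesssim(1+x)e^{-\rho x}$ and the matching bound on $\varphi_0'$. The asymmetric estimate on $x\le 0$ is more delicate: writing $y=-x\ge 0$ one has $\G_0(x)=\varphi_0(y)+\frac1\rho\varphi_0'(y)$, and the point is that the leading $y\,e^{-\rho y}$ contributions of $\varphi_0$ and $\frac1\rho\varphi_0'$ cancel, so the polynomial factor disappears and $\G_0(x)\lesssim e^{-\rho y}=e^{\rho x}$; I would read this cancellation off the two-term ($\cfct$-function) asymptotic expansion of $\varphi_0$ and $\varphi_0'$. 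Strict positivity for $x<0$ is equivalent to $-\varphi_0'(y)<\rho\varphi_0(y)$ for all $y>0$, i.e. $(\log\varphi_0)'(y)>-\rho$; this is clear asymptotically (the logarithmic derivative tends to $-\rho$ from above, owing to the $(1+y)$ factor) and on compacta it follows from continuity with $\G_0(0)=1$, but excluding an interior sign change is the part that genuinely needs input about $\varphi_0$.

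For (ii) the half-line $x\ge 0$ is immediate from the second displayed identity $\G_\lambda(x)=\varphi_\lambda(x)+\frac{\rho}{4(\alpha+1)}\sinh(2x)\varphi_\lambda^{(\alpha+1,\beta+1)}(x)$: here the coefficient $\frac{\rho}{4(\alpha+1)}\sinh(2x)$ is nonnegative, so the triangle inequality together with Koornwinder's estimate (for the parameters $(\alpha,\beta)$ and $(\alpha+1,\beta+1)$) gives $|\G_\lambda(x)|\le\varphi_0(x)+\frac{\rho}{4(\alpha+1)}\sinh(2x)\varphi_0^{(\alpha+1,\beta+1)}(x)=\G_0(x)$. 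The case $x\le 0$ is the main obstacle: there $\sinh(2x)<0$, $\G_0(x)$ is a \emph{difference} of two positive terms, and the triangle inequality overshoots, bounding $|\G_\lambda(x)|$ by $\varphi_0(|x|)+\frac{\rho}{4(\alpha+1)}\sinh(2|x|)\varphi_0^{(\alpha+1,\beta+1)}(|x|)$, which exceeds $\G_0(x)$.

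To handle $x\le 0$ I would pass to a Laplace-type representation of the Opdam function itself, $\G_\lambda(x)=\int_{-|x|}^{|x|}e^{i\lambda t}\,d\gamma_x(t)$ with $\gamma_x$ supported in $[-|x|,|x|]$, available from the rank-one Opdam theory. Both the estimate (ii) and the strict positivity in (i) would then follow at one stroke from $\gamma_x\ge 0$: indeed $\G_0(x)=\gamma_x(\R)=|\gamma_x|(\R)>0$ and $|\G_\lambda(x)|\le\int|e^{i\lambda t}|\,d\gamma_x=\gamma_x(\R)=\G_0(x)$ for every real $\lambda$. The crux is thus establishing $\gamma_x\ge 0$, and I expect this to be the hardest step, since the non-symmetric term $-\frac{1}{\rho-i\lambda}\varphi_\lambda'$ is exactly what obstructs a naive transfer of positivity from Koornwinder's symmetric kernel; one must check that its contribution does not destroy positivity of the total representing measure. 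If such a representation is not available in the required form, the fallback is a direct comparison for $x<0$: $\G_0(-y)$ solves, as a function of $y$, the first-order companion system coming from $T^{(\alpha,\beta)}\G_0=0$, and one bounds $|\G_\lambda(-y)|$ against it by a Grönwall-type argument, but this is messier and I would resort to it only if the representation route fails.
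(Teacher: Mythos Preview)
The paper does not actually prove this lemma; it simply cites \cite[Lemma~5.2]{Anker-Ayadi-Sifi}. Your proposed route via a Laplace-type representation $\G_\lambda(x)=\int e^{i\lambda t}\,d\gamma_x(t)$ with $\gamma_x\ge 0$ is precisely the argument used in that reference: Anker--Ayadi--Sifi establish (their Proposition~2.1) an explicit integral representation of $\G_\lambda$ with a kernel they compute in closed form and verify to be nonnegative under the standing hypotheses $\alpha\ge\beta\ge-\tfrac12$, $\alpha>-\tfrac12$; both the strict positivity of $\G_0$ and the bound $|\G_\lambda|\le\G_0$ for real $\lambda$ then drop out exactly as you describe. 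Your preliminary Jacobi-function reductions for $x\ge 0$ are correct but become redundant once the representation is available, and your diagnosis of why $x<0$ is the genuine obstruction (the triangle inequality overshoots, and strict positivity is not visible from the decomposition $\G_0=\varphi_0-\tfrac1\rho\varphi_0'$ alone) is accurate. The step you flag as hardest---positivity of $\gamma_x$---is handled in \cite{Anker-Ayadi-Sifi} by direct computation of the kernel rather than by any abstract argument, so your Gr\"onwall fallback is unnecessary; the asymptotic-cancellation argument you sketch for the upper bound on $x\le 0$ is also correct but likewise superseded.
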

\begin{proof} See \cite[Lemma~5.2]{Anker-Ayadi-Sifi}.
\end{proof}

The following theorem of Paley--Wiener type was established by Opdam and Cherednik for arbitrary root systems in $\R^n$, see also \cite{Schapira-contributions}.

\begin{theorem}\label{thm.PW}
\begin{enumerate}[label=(\roman*)]
\item	The transform $\mathcal{H}$ is a topological isomorphism from $C_c^\infty(\R)$ onto $\mathrm{PW}(\C)$, where $\mathrm{PW}(\C)$ is the space of entire functions $h$ on $\C$ such that
	\[\exists R\geq 0\,\forall N_0\in\mathbb{N}:\, \sup_{\lambda\in\C}(1+|\lambda|)^N e^{-R|\Re\,\lambda|}h(\lambda)<\infty.\]
	\item For every $R>0$, $\mathcal{H}$ is a topological isomorphism of $C^\infty_R(\R)$ onto $\mathrm{PW}_R(\C)$. Here $C_R^\infty(\R)$ denotes the subspace of functions $f\in C_c^\infty(\R)$ that are supported in $[-R,R]$ and $\mathrm{PW}_R(\C)$ denotes the space of entire functions $h$ for which 
	\[\forall N\in\mathbb{N}_0:\sup_{\lambda\in\C}(1+|\lambda|)^Ne^{-\gamma(-\Re\,\lambda)}h(\lambda)<\infty\]
	where $\gamma(\lambda)=\sup_{\lambda\in[-R,R]}\langle\lambda,x\rangle$.
\end{enumerate}
\end{theorem}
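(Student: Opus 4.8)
The plan is to reduce the statement to the corresponding Paley--Wiener theorem for the Jacobi transform $\mathcal{F}_{\alpha,\beta}$ from \cite{Tom-Mogens}, using the factorization
\[\curlH{f}(\lambda) = 2\mathcal{F}_{\alpha,\beta}(f_e)(\lambda) + 2(\rho+i\lambda)\mathcal{F}_{\alpha,\beta}(Jf_o)(\lambda)\]
recorded above, together with the elementary fact that the Jacobi functions $\varphi_\lambda^{(\alpha,\beta)}$ are even in $\lambda$, so that $\mathcal{F}_{\alpha,\beta}(g)(\lambda)$ is an even function of $\lambda$ whenever $g$ is even. The entire argument thus hinges on separating the two parity components of $\curlH{f}$ and treating each by the classical even theory.

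For the forward direction I would first check that both $f_e$ and $Jf_o$ lie in $C_R^\infty(\R)$ and are even whenever $f\in C_R^\infty(\R)$. For $f_e$ this is immediate; for $Jf_o$ one notes that $f_o$ is odd and compactly supported, so $\int_\R f_o = 0$, which forces $Jf_o$ to vanish outside $[-R,R]$ and to be even (a one-line substitution $t\mapsto -t$). The Jacobi Paley--Wiener theorem then places $\mathcal{F}_{\alpha,\beta}(f_e)$ and $\mathcal{F}_{\alpha,\beta}(Jf_o)$ in the (even) space $\mathrm{PW}_R(\C)$; multiplication by the polynomial $\rho+i\lambda$ only improves the decay and preserves the exponential type, so $\curlH{f}\in\mathrm{PW}_R(\C)$. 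Continuity of $f\mapsto\curlH{f}$ then follows because $f\mapsto f_e$ and $f\mapsto Jf_o$ are continuous on $C_R^\infty(\R)$ and $\mathcal{F}_{\alpha,\beta}$ is a topological isomorphism.

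For the inverse direction, given $h\in\mathrm{PW}_R(\C)$ I would recover the two components by exploiting the evenness in $\lambda$: since $h(-\lambda)=2\mathcal{F}_{\alpha,\beta}(f_e)(\lambda)+2(\rho-i\lambda)\mathcal{F}_{\alpha,\beta}(Jf_o)(\lambda)$, taking the difference and the sum yields
\[\mathcal{F}_{\alpha,\beta}(Jf_o)(\lambda)=\frac{h(\lambda)-h(-\lambda)}{4i\lambda},\qquad \mathcal{F}_{\alpha,\beta}(f_e)(\lambda)=\frac{h(\lambda)+h(-\lambda)}{4}-\rho\,\frac{h(\lambda)-h(-\lambda)}{4i\lambda}.\]
The numerator $h(\lambda)-h(-\lambda)$ is odd and entire, hence vanishes at the origin, so both right-hand sides are even entire functions; the factor $\lambda^{-1}$ improves the decay for $|\lambda|\geq 1$ and is controlled near the origin by the removable singularity, so both functions lie in $\mathrm{PW}_R(\C)$. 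The inverse Jacobi Paley--Wiener theorem then produces even $u,v\in C_R^\infty(\R)$ with $\mathcal{F}_{\alpha,\beta}(u)$ and $\mathcal{F}_{\alpha,\beta}(v)$ equal to the two functions above; setting $f:=u+v'$, so that $f_e=u$, $f_o=v'$ is odd, and $Jf_o=J(v')=v$, a short computation recovers $\curlH{f}=h$. Taking the union over $R>0$ then gives part (i) from part (ii).

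I expect the only genuine subtlety to be the inverse direction near $\lambda=0$: one must verify that the division by $\lambda$ returns an entire function lying in $\mathrm{PW}_R(\C)$ with the stated seminorm control, and not merely a meromorphic one. This is precisely where the growth hypotheses defining $\mathrm{PW}_R$ must be applied uniformly, and it is the step that upgrades the construction from a bijection to a bicontinuous isomorphism. Everything else -- the parity bookkeeping, the polynomial multiplier $\rho+i\lambda$, and the passage between $f_o$ and $Jf_o$ via differentiation and integration -- is routine once these estimates are in hand.
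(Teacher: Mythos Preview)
The paper does not actually prove this theorem: it is quoted as a known result due to Opdam and Cherednik for arbitrary root systems (with a reference to \cite{Schapira-contributions}), so there is no ``paper's own proof'' to compare against beyond that citation. Your proposal is therefore not a reproduction of the paper's argument but an independent, rank-one--specific proof, and as such it is essentially correct. The reduction to the Jacobi Paley--Wiener theorem via the decomposition $\curlH{f}=2\mathcal{F}_{\alpha,\beta}(f_e)+2(\rho+i\lambda)\mathcal{F}_{\alpha,\beta}(Jf_o)$ is exactly the right idea in dimension one, and your parity bookkeeping (in particular the verification that $Jf_o$ is even and supported in $[-R,R]$, and the recovery of the two even pieces from $h(\pm\lambda)$) is sound. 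One small verbal slip: multiplication by the polynomial $\rho+i\lambda$ does not ``improve'' the decay---it worsens it by one order---but this is of course harmless since $\mathrm{PW}_R(\C)$ absorbs polynomial factors; you should phrase it that way. Compared with the general-root-system proofs in the cited literature (which rely on Harish-Chandra series expansions, Gangolli-type estimates, and contour shifts), your argument is considerably more elementary and self-contained, at the price of being tied to the one-dimensional factorization through the Jacobi transform; it would not extend to higher rank, whereas the cited results do.
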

We shall later obtain a Paley--Wiener theorem for the inverse transform $\mathcal{I}$. Since the map $\mathcal{H}$ is not self-dual, this is not automatic.

\section{Around the Hausdorff--Young inequality}\label{section.HY}

\begin{lemma}\label{lemma-HY1}
Let $\alpha\geq\beta\geq-\frac{1}{2}$ with $\alpha\neq-\frac{1}{2}$ and let $p\in[1,2)$, $q=\frac{p}{p-1}$. Define
\[\Omega_p=\Bigl\{\lambda=\xi+i\eta\in\C\,\bigl|\, \vert\eta\vert<\bigl(\tfrac{2}{p}-1\bigr)\rho\Bigr\}.\]
\begin{enumerate}
\item If $\lambda\in \Omega_p$ then $G_\lambda\in L^q(\R,d\mu)$.
\item If $f\in L^p(\R,d\mu)$, then $\mathcal{H}f(\lambda)$ is well-defined and holomorphic in $\lambda\in \Omega_p$, and $\vert\mathcal{H}f(\lambda)\vert\leq\|f\|_{p,\mu}\|G_\lambda\|_{q,d\nu}$ for all $\lambda\in \Omega_p$.
\item There exists a constant $c_p<\infty$ such that $\|\mathcal{H}f\|_{q,\nu}\leq c_p\|f\|_{p,\mu}$ for every $f\in L^p(\R,d\mu)$.
\end{enumerate}
\end{lemma}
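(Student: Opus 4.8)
The plan is to establish the three assertions in turn, with the substantive work concentrated in (1). For (1) I would first upgrade the pointwise bound of Lemma \ref{lemma-G-estimate}, which is stated only for real spectral parameter, to complex $\lambda$. Writing $\lambda=\xi+i\eta$ and using the representation $G_\lambda^{(\alpha,\beta)}=\varphi_\lambda^{(\alpha,\beta)}+\frac{\rho}{4(\alpha+1)}\sinh(2x)\,\varphi_\lambda^{(\alpha+1,\beta+1)}$ recorded in the preliminaries, I would invoke the standard Harish--Chandra type estimate $|\varphi_\lambda^{(\alpha,\beta)}(x)|\lesssim(1+|x|)e^{(|\eta|-\rho)|x|}$ for Jacobi functions, valid uniformly in $\xi\in\R$ for $|\eta|\leq\rho$ (and its analogue with $\rho$ replaced by $\rho+2$ for the $(\alpha+1,\beta+1)$-function, whose extra decay is exactly cancelled by the factor $\sinh(2x)$). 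This yields
\[
|G_\lambda(x)|\lesssim (1+|x|)\,e^{(|\eta|-\rho)|x|},\qquad \lambda=\xi+i\eta,\ |\eta|\leq\rho,
\]
with implied constant independent of $\xi$. Since $A_{\alpha,\beta}(|x|)\sim e^{2\rho|x|}$ as $|x|\to\infty$ and $A_{\alpha,\beta}(|x|)\sim|x|^{2\alpha+1}$ near $0$, the integral $\int_\R|G_\lambda(x)|^qA_{\alpha,\beta}(|x|)\,dx$ converges at the origin (as $\alpha>-\tfrac12$) and converges at infinity precisely when $q(|\eta|-\rho)+2\rho<0$, i.e.\ when $|\eta|<(1-\tfrac{2}{q})\rho=(\tfrac{2}{p}-1)\rho$. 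This is exactly the defining condition for $\Omega_p$, so $G_\lambda\in L^q(\R,d\mu)$ for $\lambda\in\Omega_p$.

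For (2) the pointwise estimate is immediate from H\"older's inequality: since $d\mu$ is even,
\[
|\mathcal{H}f(\lambda)|=\Bigl|\int_\R f(x)\,G_\lambda(-x)\,A_{\alpha,\beta}(|x|)\,dx\Bigr|\leq \|f\|_{p,\mu}\,\|G_\lambda\|_{q,\mu},
\]
which is finite by (1), so $\mathcal{H}f(\lambda)$ is well-defined on $\Omega_p$. For holomorphy I would apply Morera's theorem. For each fixed $x$ the map $\lambda\mapsto G_\lambda(-x)$ is entire, and for any triangle $\Delta\subset\Omega_p$ the bound in (1) is locally uniform in $\lambda$ (it depends only on $|\eta|$, which remains bounded away from $(\tfrac{2}{p}-1)\rho$ on the compact set $\partial\Delta$), whence $\sup_{\lambda\in\partial\Delta}\|G_\lambda\|_{q,\mu}<\infty$. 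Consequently $\int_{\partial\Delta}\int_\R|f(x)|\,|G_\lambda(-x)|\,A_{\alpha,\beta}(|x|)\,dx\,|d\lambda|<\infty$, Fubini's theorem applies, and $\oint_{\partial\Delta}\mathcal{H}f(\lambda)\,d\lambda=\int_\R f(x)A_{\alpha,\beta}(|x|)\bigl(\oint_{\partial\Delta}G_\lambda(-x)\,d\lambda\bigr)dx=0$, so $\mathcal{H}f$ is holomorphic on $\Omega_p$.

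For (3) I would argue by interpolation rather than by integrating the bound from (2). There are two endpoints. At $p=1,\ q=\infty$, the estimate $|G_\lambda(x)|\leq G_0(x)\leq\|G_0\|_\infty$ for real $\lambda$ (Lemma \ref{lemma-G-estimate}, with $G_0$ bounded because $\rho>0$) gives $\|\mathcal{H}f\|_{\infty,\nu}\leq\|G_0\|_\infty\,\|f\|_{1,\mu}$. At $p=2,\ q=2$, the Plancherel theorem for $\mathcal{H}$ used in Lemma \ref{gen.inv} gives the isometry $\|\mathcal{H}f\|_{2,\nu}=\|f\|_{2,\mu}$, so $\mathcal{H}\colon L^2(\R,d\mu)\to L^2(\R,d\nu)$ is bounded. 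Applying the Riesz--Thorin interpolation theorem to $\mathcal{H}$ on the dense subspace $C_c(\R)$ between these two bounds, with interpolation parameter $\theta=2-\tfrac{2}{p}$, produces $\|\mathcal{H}f\|_{q,\nu}\leq c_p\|f\|_{p,\mu}$ with $\tfrac1q=1-\tfrac1p$ and $c_p=\|G_0\|_\infty^{\,2/p-1}<\infty$, and the estimate then extends to all of $L^p(\R,d\mu)$ by density.

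The main obstacle is the first step of (1): promoting the real-parameter bound $|G_\lambda|\leq G_0$ to a complex-parameter bound that is simultaneously sharp in the exponential rate $e^{(|\eta|-\rho)|x|}$ and uniform in $\Re\lambda$, since it is precisely this rate, matched against the growth $e^{2\rho|x|}$ of $A_{\alpha,\beta}$, that reproduces the strip $\Omega_p$ exactly. Once this estimate is secured, the remaining ingredients (H\"older, the Morera--Fubini argument, and Riesz--Thorin) are routine.
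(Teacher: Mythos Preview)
Your proposal is correct and follows essentially the same route as the paper's proof. The paper dispatches (1) and (2) by reference to the Jacobi case \cite[Lemma~3.1]{Tom-Mogens}, which is precisely what your reduction via $G_\lambda=\varphi_\lambda^{(\alpha,\beta)}+\tfrac{\rho}{4(\alpha+1)}\sinh(2x)\,\varphi_\lambda^{(\alpha+1,\beta+1)}$ and the Harish--Chandra estimates for $\varphi_\lambda$ accomplish explicitly; for (3) both you and the paper use Riesz--Thorin interpolation between the Plancherel isometry and the $L^1\to L^\infty$ bound coming from Lemma~\ref{lemma-G-estimate}.
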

\begin{proof}
The first two statements follow as in the case of Jacobi analysis, cf. \cite[Lemma~3.1]{Tom-Mogens}, and the third statement is established by interpolation between the estimates $\|\mathcal{H}f\|_{2,\nu}\lesssim\|f\|_{2,\mu}$ (which is the Plancherel theorem) and the uniform estimate $\|\mathcal{H}f\|_{\infty,\nu}\lesssim\|f\|_{1,\mu}$ that follows from lemma \ref{lemma-G-estimate}.
\end{proof}

\begin{remark}
It was observed in \cite{Eguchi-Kumahara-Lp} that the aforementioned Hausdorff--Young inequality can be improved considerably, by using the fact that the Fourier transform on $G/K$ -- and in our case the Cherednik--Opdam transform -- is well-defined for $\lambda\in\Omega_p$. The following extension of the Hausdorff--Young inequality is analogous to \cite[Lemma~5.3]{Narayanan-Pasquale-Pusti} but we shall not need it in later parts of the present paper. One simply notices that the non-symmetric Plancherel density $(1-\rho/i\lambda)|\cfct(\lambda)|^{-2}$ decays like $|\cfct(\lambda)|^{-2}$ for large $|\lambda|$.
\begin{lemma}\label{lemma-HY2}\begin{enumerate}
\item Let $f\in L^p(\R,d\mu)$ for some $p\in(1,2)$ and $\eta\in\Omega_p$. Then there is a positive constant $C_{\eta,p}$ such that
\[\Bigl(\int_\R |\curlH{f}(\lambda+\eta)|^q\,d\nu(\lambda)\Bigr)^{1/q}\leq C_{\eta,p}\|f\|_{p,\mu}\quad\text{for all } f\in L^p(\R,d\mu).\]
\item $\sup_{\lambda\in\R}|\curlH{f}(\lambda+\eta)|\leq C_{\eta,p}\|f\|_{p,\mu}$.
\end{enumerate}
\end{lemma}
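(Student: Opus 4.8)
The plan is to transcribe the Eguchi--Kumahara argument in the form used by Narayanan--Pasquale--Pusti for the Jacobi transform, the only genuinely new point being the behaviour of the non-symmetric Plancherel density flagged in the remark. I would establish the pointwise bound (2) first and then deduce the $L^q$-estimate (1) from it by complex interpolation.

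For (2), fix $\eta\in\Omega_p$ and write $\sigma=\im\eta$. Because $\Omega_p$ is a horizontal strip, the entire line $\{\lambda+\eta:\lambda\in\R\}$ has constant imaginary part $\sigma$ and therefore lies in $\Omega_p$; so Lemma \ref{lemma-HY1}(2) applies and gives $|\curlH{f}(\lambda+\eta)|\le\|f\|_{p,\mu}\,\|\G_{\lambda+\eta}\|_{q,\mu}$ for every $\lambda\in\R$. Hence (2) reduces to the uniform estimate $\sup_{\lambda\in\R}\|\G_{\lambda+\eta}\|_{q,\mu}<\infty$. This is the place where the complex-parameter analogue of Lemma \ref{lemma-G-estimate} is required: one checks, from the Harish--Chandra type asymptotics of the Opdam functions, a bound of the form $|\G_{\xi+i\sigma}(x)|\le C_\sigma\,\G_0(x)\,e^{|\sigma|\,|x|}$ that is uniform in $\xi\in\R$. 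Together with $A_{\alpha,\beta}(|x|)\asymp e^{2\rho|x|}$ and the estimate for $\G_0$ in Lemma \ref{lemma-G-estimate}, this shows $\G_{\xi+i\sigma}\in L^q(\R,d\mu)$ with a bound independent of $\xi$ exactly in the range $|\sigma|<(\tfrac2p-1)\rho$, which is $\eta\in\Omega_p$; taking $C_{\eta,p}=\sup_{\lambda\in\R}\|\G_{\lambda+\eta}\|_{q,\mu}$ gives (2).

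For (1), given a target $\eta$ with $|\sigma|=|\im\eta|<(\tfrac2p-1)\rho$, set $\theta=\tfrac2p-1\in(0,1)$ and $s_1=\sigma/\theta$, so that $|s_1|<\rho$. I would apply Stein's complex interpolation theorem to the analytic family $T_zf(\lambda)=e^{z^2}\,\curlH{f}(\lambda+i s_1 z)$ on the strip $0\le\Re z\le1$, which is holomorphic in $z$ since $\curlH{f}$ is holomorphic on $\Omega_p$. At $\Re z=0$ one uses the Plancherel identity $\|\curlH{f}\|_{2,\nu}=\|f\|_{2,\mu}$ (the pair $(2,2)$, no shift), and at $\Re z=1$ one uses part (2) at $p=1$, namely $\sup_\lambda|\curlH{f}(\lambda+is_1)|\le C\|f\|_{1,\mu}$ (the pair $(1,\infty)$, maximal shift); note that on $\Re z=1$ the real part of the shift only reparametrises the line at height $s_1$, so this estimate is uniform in $\Im z$. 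The interpolated exponents satisfy $q_\theta=p_\theta'$ and $p_\theta=p$, and at $z=\theta$ the family evaluates $\curlH{f}$ on the line at height $s_1\theta=\sigma$, whence Stein's theorem gives $\|\curlH{f}(\cdot+\eta)\|_{q,\nu}\le C_{\eta,p}\|f\|_{p,\mu}$.

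The hard part will be making the two boundary estimates compatible with the measure $d\nu$, which is not translation invariant. On $\Re z=0$ the contour stays real but is shifted by the real number $\Re(i s_1\cdot iy)=-s_1y$, and since the density of $d\nu$ vanishes at $\lambda=0$ and grows like $|\cfct_{\alpha,\beta}(\lambda)|^{-2}\asymp|\lambda|^{2\alpha+1}$ at infinity, a real translate of $\curlH{f}$ need not be comparable in $L^2(\nu)$ to $\curlH{f}$ itself. This is precisely the difficulty resolved by the remark: because $(1-\rho/i\lambda)|\cfct(\lambda)|^{-2}$ decays like $|\cfct(\lambda)|^{-2}$, the measure $d\nu$ has the same polynomial growth at infinity and the same local structure as the Plancherel measure in Jacobi analysis, so one compares the shifted and unshifted densities for $|\lambda|\ge\delta$ (where the ratio is bounded by a power of $1+|y|$, absorbed by the Gaussian factor $e^{z^2}$) and treats a fixed neighbourhood of the origin separately. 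With these two Opdam-specific inputs---the uniform complex-parameter bound on $\G_\lambda$ and the density comparison---in place, the remainder is the standard Phragm\'en--Lindel\"of/Stein machinery and goes through exactly as in the Jacobi case.
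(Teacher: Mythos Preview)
The paper does not give a detailed proof of this lemma; it merely states that the result is analogous to \cite[Lemma~5.3]{Narayanan-Pasquale-Pusti} and records the single observation that the non-symmetric Plancherel density $(1-\rho/i\lambda)\lvert\cfct(\lambda)\rvert^{-2}$ has the same large-$\lvert\lambda\rvert$ behaviour as $\lvert\cfct(\lambda)\rvert^{-2}$. Your plan---transcribe the Eguchi--Kumahara/Narayanan--Pasquale--Pusti interpolation argument and account for the density via exactly this observation---is therefore precisely the route the paper indicates, and your sketch in fact supplies more detail than the paper itself. In particular, your identification of the two Opdam-specific inputs (the uniform complex-parameter bound on $\G_\lambda$ for part~(2), and the density comparison needed on the boundary $\Re z=0$ in the Stein interpolation for part~(1)) is accurate and matches the paper's hint.

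One point worth tightening: the treatment of a neighbourhood of the origin on the line $\Re z=0$ is genuinely delicate because the Plancherel density vanishes there, so the ratio $w(\lambda+s_1 y)/w(\lambda)$ is not bounded as $\lambda\to 0$. This same issue already occurs for the symmetric density $\lvert\cfct\rvert^{-2}$ in the Jacobi and Heckman--Opdam settings, so it is not new to the Cherednik--Opdam case and is absorbed by the argument you are importing; but your phrase ``treats a fixed neighbourhood of the origin separately'' leaves the mechanism implicit. Since the paper itself offers no further detail, this is not a divergence from the paper's argument---only a place where the imported proof deserves a closer look when written out in full.
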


We recently obtained an extension of the Hausdorff--Young inequality and several versions of the classical Hardy--Littlewood inequalities for the Heckman--Opdam transform acting on $W$-invariant functions in $\R^n$. These were based on the Hausdorff--Young inequalities obtained in \cite{Narayanan-Pasquale-Pusti}. By using the Hausdorff--Young inequalities in lemma \ref{lemma-HY1}(iii) and lemma \ref{lemma-HY2}, one can generalize in a straightforward manner  \cite[Theorem~3.6; 3.9]{Johansen-HL} to the present context of Cherednik--Opdam analysis on $\R$. We leave the details to the interested reader. 
\end{remark}
If $f\in L^1(\R,d\mu)\cap L^p(\R,d\mu)$ for some $p\in(1,2)$, the Hausdorff--Young inequality implies that $\curlH{f}$ is well-defined and belongs to $L^q(\R,d\nu)$. The transform $\mathcal{H}$ therefore extends to a continuous linear map $\mathscr{H}_p:L^p(\R,d\mu)\to L^q(\R,d\nu)$ that coincides with $\mathcal{H}$ on $L^1(\R,d\mu)\cap L^p(\R,d\mu)$. In particular, by the Plancherel theorem, we may identify $\mathcal{H}$ with $\mathscr{H}_2$. We employ a similar notational convention to define the map $\mathscr{I}_p:L^p(\R,d\nu)\to L^q(\R,d\mu)$ as the extension of $\mathcal{I}=\mathcal{H}^{-1}$.

\begin{prop}\label{prop-injective}
	Let $f\in L^p(\R,d\mu)$ for some $p\in [1,2]$ and assume $\mathscr{H}_pf=0$ $\nu$-almost everywhere. Then $f=0$ $\mu$-almost everywhere, that is, $\mathscr{H}_p$ is injective on $L^p(\R,d\mu)$
\end{prop}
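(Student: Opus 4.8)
The plan is to settle the endpoint $p=2$ first and then reduce the range $1\le p<2$ to it through the convolution structure. For $p=2$ the assertion is exactly the injectivity of $\mathscr{H}_2=\mathcal{H}$, which is immediate from the Plancherel theorem. So assume $1\le p<2$ and let $f\in L^p(\R,d\mu)$ satisfy $\mathscr{H}_pf=0$ $\nu$-almost everywhere. The idea is to smooth $f$ by convolving with a test function $g\in C_c^\infty(\R)$, to carry the vanishing of $\mathscr{H}_pf$ through the product formula, and thereby to force $f\star g$ to be the zero element of $L^2(\R,d\mu)$; an approximate-identity argument then gives $f=0$.

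Concretely, fix $g\in C_c^\infty(\R)$ and choose the exponent $s\in(1,2]$ determined by $\tfrac1s=\tfrac32-\tfrac1p$, so that $\tfrac1p+\tfrac1s-1=\tfrac12$. Since $g\in L^s(\R,d\mu)$, the convolution inequality (the Proposition preceding Lemma~\ref{lemma-G-estimate}) shows that $f\star g\in L^2(\R,d\mu)$ and, more importantly, that convolution with $g$ is continuous from $L^p$ into $L^2$. First I would pick $f_n\in C_c(\R)$ with $f_n\to f$ in $L^p(\R,d\mu)$ and record two convergences: on the one hand $f_n\star g\to f\star g$ in $L^2(\R,d\mu)$, whence $\mathcal{H}(f_n\star g)\to\mathcal{H}(f\star g)$ in $L^2(\R,d\nu)$ by Plancherel; on the other hand, by Lemma~\ref{lemma-HY1}(iii) (continuity of $\mathscr{H}_p$) together with the boundedness of $\mathcal{H}g$,
\[\mathcal{H}f_n\cdot\mathcal{H}g=\mathscr{H}_pf_n\cdot\mathcal{H}g\longrightarrow\mathscr{H}_pf\cdot\mathcal{H}g=0\quad\text{in }L^q(\R,d\nu).\]
Since $f_n,g\in C_c(\R)$, the product formula $\mathcal{H}(f_n\star g)=\mathcal{H}f_n\cdot\mathcal{H}g$ from \cite[Proposition~4.9]{Anker-Ayadi-Sifi} holds for every $n$. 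Passing to a subsequence along which the left side converges $\nu$-almost everywhere and a further subsequence along which the right side does, the two limits must agree, so $\mathcal{H}(f\star g)=0$ $\nu$-almost everywhere. As $f\star g\in L^2(\R,d\mu)$, the already-settled case $p=2$ now yields $f\star g=0$ $\mu$-almost everywhere.

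It remains to deduce $f=0$ from the fact that $f\star g=0$ for every $g\in C_c^\infty(\R)$. Here I would invoke an approximate identity: a family $g_\varepsilon\in C_c^\infty(\R)$, supported near the origin with $\int g_\varepsilon\,d\mu=1$, for which $f\star g_\varepsilon\to f$ in $L^p(\R,d\mu)$ as $\varepsilon\to0$. Each $f\star g_\varepsilon$ vanishes by the previous paragraph, so $f=0$ $\mu$-almost everywhere. A more hands-on alternative, avoiding approximate identities, is to take $s=p'$ above so that $f\star g\in L^\infty(\R,d\mu)$ is in fact continuous; evaluating the identity $f\star g(x)=\int_\R\tau_x^{(\alpha,\beta)}f(-y)g(y)A_{\alpha,\beta}(|y|)\,dy$ at $x=0$ and using $d\mu_{0,y}^{(\alpha,\beta)}=\delta_y$, hence $\tau_0^{(\alpha,\beta)}f=f$, gives $\int_\R f(-y)g(y)\,d\mu(y)=f\star g(0)=0$ for all $g\in C_c^\infty(\R)$, which forces $f=0$ $\mu$-almost everywhere.

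I expect the genuine work to lie in two places. The first is the passage to the limit in the product formula: $\mathscr{H}_pf$ lives naturally in $L^q$ while $\mathcal{H}(f\star g)$ is controlled in $L^2$, so the two modes of convergence must be reconciled (this is why I pass to an almost-everywhere convergent subsequence, and why the consistency $\mathscr{H}_pf_n=\mathcal{H}f_n$ on $C_c(\R)$ is used). The second, and more delicate, is the concluding step: establishing either the $L^p$-continuity of the generalized translation $x\mapsto\tau_x^{(\alpha,\beta)}f$ needed for the approximate-identity argument, or the continuity of $f\star g$ needed for the evaluation at $x=0$. Both rest on the fine mapping properties of $\tau_x^{(\alpha,\beta)}$ from \cite{Anker-Ayadi-Sifi} rather than on any hypergroup axioms, consistent with the remark that the hypergroup structure is inessential here. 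Finally I would note that the seemingly more direct route -- using $\mathcal{H}f=2\mathcal{F}_{\alpha,\beta}(f_e)+2(\rho+i\lambda)\mathcal{F}_{\alpha,\beta}(Jf_o)$ and injectivity of the Jacobi transform -- is less convenient, since for $f\in L^p$ it is not clear that $Jf_o$ lies in a space to which Jacobi-transform injectivity applies.
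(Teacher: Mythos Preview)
Your argument is correct, but it takes a different route from the paper's. The paper proceeds by a short duality argument: for fixed $g\in C_c^\infty(\R)$ one defines the functionals $T_g(h)=\langle h,g\rangle_{2,\mu}$ and $\widehat{T}_g(h)=\int_\R\curlH{h}(\lambda)\overline{\curlH{\check{g}}(\lambda)}\,d\nu(\lambda)$ on $L^p(\R,d\mu)$, observes that they agree on the dense subspace $L^1\cap L^2$ by Plancherel, and checks both are continuous (H\"older for $T_g$; Hausdorff--Young together with the Paley--Wiener decay of $\curlH{g}$, hence $\curlH{g}\in L^p(\R,d\nu)$, for $\widehat{T}_g$). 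Thus $T_g=\widehat{T}_g$ on all of $L^p$, and $\mathscr{H}_pf=0$ forces $\langle f,g\rangle_{2,\mu}=0$ for every $g\in C_c^\infty(\R)$.

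Your approach trades this Parseval-by-density step for the convolution machinery: you push $f$ into $L^2$ by convolving with $g\in C_c^\infty$, carry the vanishing of $\mathscr{H}_pf$ through the product formula in the limit, and then recover $\langle \check f,g\rangle=0$ either via an approximate identity or by evaluating the continuous function $f\star g$ at $0$. The second of your two closing options is the clean one and actually reproduces, after unwinding, exactly the pairing the paper writes down directly; the approximate-identity route would require the $L^p$-continuity of $x\mapsto\tau_x^{(\alpha,\beta)}f$ that you correctly flag as nontrivial here. In short, the paper's proof is shorter and uses only the Paley--Wiener estimate for $\curlH{g}$, whereas yours leans on Young's inequality and the product formula; both reach the same conclusion $\int_\R f(-y)g(y)\,d\mu(y)=0$ for all $g\in C_c^\infty(\R)$.
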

\begin{proof}
	The argument is a slight modification of the proof for the Jacobi transform, resp. for the Heckman--Opdam transform, cf. \cite[Theorem~5.4]{Narayanan-Pasquale-Pusti}.
	\medskip
	
	Associate to a fixed function $g\in C_c^\infty(\R)$ the linear functionals 
	\[T_g(h)=\int_\R h(x)\overline{g(x)}\,d\mu(x),\quad \widehat{T}_g(h)=\int_\R \curlH{h}(\lambda)\overline{\curlH{\check{g}}(\lambda)}\,d\nu(\lambda),\quad h\in L^p(\R,d\mu).\]
	These coincide on the space $L^1(\R,d\mu)\cap L^2(\R,d\mu)$ which is dense in $L^p(\R,d\mu)$. We claim that $T_g$ and $\widehat{T}_g$ are continuous, from which it follows that they coincide on $L^p(\R,d\mu)$. In particular, if $f\in L^p(\R,d\mu)$ and $\curlH{f}=0$, then $\widehat{T}_g(h)=T_g(h)=0$, and $\langle f,g\rangle_{L^2(\R,d\mu)}=0$ for all $g\in C_c^\infty(\R)$. But then $f=0$.
	
	The continuity of $T_g$ follows from the obervation that $|T_g(h)|\leq \|h\|_{p,\mu}\|g\|_{q,\mu}$ where $\frac{1}{p}+\frac{1}{q}=1$. Additionally
$|\widehat{T}_g(h)|\leq\|\curlH{g}\|_{p,\nu}\|\curlH{h}\|_{q,\nu} \leq C_p\|\curlH{g}\|_{p,\nu} \|h\|_{p,\nu}$, 	so $\widehat{T}_g$ is also continuous. For the first inequality we used the Paley--Wiener estimate in theorem \ref{thm.PW}. This completes the proof.
\end{proof}
\begin{prop}
\begin{enumerate}[label=(\roman*)]
\item If $f\in L^{p_1}(\R,d\mu)\cap L^{p_2}(\R,d\mu)$, then $\mathscr{H}_{p_1}f=\mathscr{H}_{p_2}f$ $\nu$-almost everywhere on $\R$.
\item If $h\in L^{p_1}(\R,d\nu)\cap L^{p_2}(\R,d\nu)$, then $\mathscr{I}_{p_1}h=\mathscr{I}_{p_2}h$ $\mu$-almost everywhere on $\R$.
\end{enumerate}
\end{prop}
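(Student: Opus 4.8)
The plan is to exploit the fact that, by their very construction, the two extensions reduce to one and the same transform $\mathcal{H}$ on a common dense core, and then to transfer this equality to all of $L^{p_1}(\R,d\mu)\cap L^{p_2}(\R,d\mu)$ by means of a single approximating sequence that converges simultaneously in both norms. Assume without loss of generality that $p_1\leq p_2$ and fix $f\in L^{p_1}(\R,d\mu)\cap L^{p_2}(\R,d\mu)$. The key observation is that $\mathscr{H}_{p_1}$ agrees with $\mathcal{H}$ on $L^1(\R,d\mu)\cap L^{p_1}(\R,d\mu)$ and $\mathscr{H}_{p_2}$ agrees with $\mathcal{H}$ on $L^1(\R,d\mu)\cap L^{p_2}(\R,d\mu)$; consequently, on any function belonging to $L^1\cap L^{p_1}\cap L^{p_2}$ the two extensions coincide automatically.

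First I would produce the approximating sequence by truncation. For $n\in\N$ put $E_n=\{x\in\R:|x|\leq n,\ 1/n\leq|f(x)|\leq n\}$ and set $f_n=f\,\mathbf{1}_{E_n}$. Each $f_n$ is bounded and supported on a set of finite $\mu$-measure, so $f_n\in L^1(\R,d\mu)\cap L^{p_1}(\R,d\mu)\cap L^{p_2}(\R,d\mu)$. Moreover $f_n\to f$ pointwise $\mu$-almost everywhere, while $|f-f_n|^{p_i}\leq|f|^{p_i}\in L^1(\R,d\mu)$ for $i=1,2$; dominated convergence then gives $f_n\to f$ both in $L^{p_1}(\R,d\mu)$ and in $L^{p_2}(\R,d\mu)$. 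This is the only step in which the hypothesis $f\in L^{p_1}\cap L^{p_2}$ is genuinely used.

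Because $f_n\in L^1\cap L^{p_1}\cap L^{p_2}$, the preceding observation yields $\mathscr{H}_{p_1}f_n=\mathcal{H}f_n=\mathscr{H}_{p_2}f_n$ for every $n$. Invoking the continuity of $\mathscr{H}_{p_1}:L^{p_1}(\R,d\mu)\to L^{q_1}(\R,d\nu)$ and of $\mathscr{H}_{p_2}:L^{p_2}(\R,d\mu)\to L^{q_2}(\R,d\nu)$ (with $1/p_i+1/q_i=1$), I would conclude that the single sequence $\mathcal{H}f_n$ converges to $\mathscr{H}_{p_1}f$ in $L^{q_1}(\R,d\nu)$ and to $\mathscr{H}_{p_2}f$ in $L^{q_2}(\R,d\nu)$. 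Passing first to a subsequence along which $\mathcal{H}f_n\to\mathscr{H}_{p_1}f$ holds $\nu$-almost everywhere, and then to a further subsequence along which $\mathcal{H}f_n\to\mathscr{H}_{p_2}f$ holds $\nu$-almost everywhere, identifies the two limits $\nu$-almost everywhere, whence $\mathscr{H}_{p_1}f=\mathscr{H}_{p_2}f$ $\nu$-almost everywhere. Statement (ii) follows from the identical argument with $\mu$ and $\nu$ interchanged and $\mathcal{H}$ replaced by $\mathcal{I}=\mathcal{H}^{-1}$, using that the maps $\mathscr{I}_{p_i}$ are continuous and all restrict to $\mathcal{I}$ on $L^1(\R,d\nu)\cap L^{p_i}(\R,d\nu)$. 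The only genuine subtlety — and it is a mild one — is precisely this simultaneous convergence of one sequence in two distinct target norms; it is handled by the routine extraction of a common almost-everywhere convergent subsequence, and I expect no deeper obstacle.
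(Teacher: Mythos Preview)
Your argument for (i) is correct and follows essentially the same route as the paper: the paper approximates by simple functions where you use the truncations $f\mathbf{1}_{E_n}$, but the structure --- simultaneous approximation in both $L^{p_i}$-norms, continuity of the extensions from the Hausdorff--Young inequality, and extraction of a common $\nu$-a.e.\ convergent subsequence --- is identical.

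For (ii) you simply rerun the argument of (i) with $\mathcal{I}$ in place of $\mathcal{H}$ and the measures interchanged, which is correct and more transparent than the paper's one-line deduction of (ii) from part (i) together with the injectivity of $\mathscr{H}_p$. Your version has the small advantage of being entirely self-contained and not relying on Proposition~\ref{prop-injective}.
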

\begin{proof}
\begin{enumerate}[label=(\roman*)]
\item Choose a sequence $\{g_n\}_{n=1}^\infty$ of simple functions on $\R$ such that 
\[\lim_{n\to\infty}\|f-g_n\|_{p_1,\mu}=\lim_{n\to\infty}\|f-g_n\|_{p_2,\mu}=0.\]
Each function $\curlH{g_n}$ belongs to $L^{p_1^\prime}(\R,d\nu)\cap L^{p_2^\prime}(\R,d\nu)$ by the Hausdorff--Young inequality, and
\[\lim_{n\to\infty}\|\mathscr{H}_{p_1}f-\curlH{g_n}\|_{p_1^\prime,\nu}=\lim_{n\to\infty}\|\mathscr{H}_{p_2}f-\curlH{g_n}\|_{p_2^\prime,\nu}=0.\]
One can therefore extract subsequences $\{\curlH{g_{n_k}}\}_{k=1}^\infty$ and $\{\curlH{g_{n_l}}\}_{l=1}^\infty$ of $\{\curlH{g_n}\}_{n=1}^\infty$ such that $\curlH{g_{n_k}}\to\mathscr{H}_{p_1}f$ and $\curlH{g_{n_l}}\to\mathscr{H}_{p_2}f$ $\nu$-almost everywhere on $\R$, from which it follows that $\mathscr{H}_{p_1}f=\mathscr{H}_{p_2}f$ $\nu$-almost everywhere on $\R$ as claimed.
\item Since $\mathscr{I}_{q_j}h\in L^{q_j^\prime}(\R,d\mu)$ for $j=1,2$, the claim follows from the injectivity of $\mathscr{H}_p$ for $p\in(1,2]$ and (i).
\end{enumerate}
\end{proof}

\begin{lemma}\begin{enumerate}[label=(\alph*)]
		\item For $f,g\in L^2(\R,d\mu)$ and $h\in L^1(\R,d\mu)$ the convolution product $f\star g$ belongs to $C_0(\R)$ and 
		\[\int_\R (f\star g)(y)\check{h}(y)\,d\mu(y)=\int_\R \check{f}(y)(g\star h)(y)\,d\mu(y).\]
		\item For $f\in L^p(\R,d\mu)$, $p\in[1,2]$, and $h\in L^{p}(\R,d\nu)$ it holds that $\mathcal{I}(\mathscr{H}_pf\cdot h)=f\star(\mathscr{I}_p{h})$.
	\end{enumerate}
\end{lemma}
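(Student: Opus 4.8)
The plan is to treat the two parts separately, reducing each by density and the boundedness estimates already available to a class of functions on which the multiplicativity $\mathcal{H}(f\star g)=\mathcal{H}f\cdot\mathcal{H}g$ and the inversion formulae apply directly; throughout I use $\tau_0^{(\alpha,\beta)}=\mathrm{id}$, which is immediate from $d\mu_{0,y}^{(\alpha,\beta)}=d\delta_y$. For (a), I would first obtain $f\star g\in C_0(\R)$: for $f,g\in C_c(\R)$ the support constraint $\bigl|\,|x|-|y|\,\bigr|<|z|<|x|+|y|$ carried by $\mu_{x,y}^{(\alpha,\beta)}$ forces $f\star g$ to be compactly supported, and its continuity follows from that of $x\mapsto\tau_x^{(\alpha,\beta)}f$ with dominated convergence, so $C_c(\R)\star C_c(\R)\subseteq C_c(\R)$; for general $f,g\in L^2(\R,d\mu)$ the convolution inequality with $p=q=2$, $r=\infty$ exhibits $f\star g$ as a uniform limit of such functions, whence $f\star g\in C_0(\R)$. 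For the displayed identity I would note that both sides are bounded trilinear forms on $L^2(\R,d\mu)\times L^2(\R,d\mu)\times L^1(\R,d\mu)$ --- the left side dominated by $\|f\star g\|_{\infty,\mu}\|h\|_{1,\mu}\le C\|f\|_{2,\mu}\|g\|_{2,\mu}\|h\|_{1,\mu}$, the right side, via the convolution inequality with $r=2$, by $\|f\|_{2,\mu}\|g\star h\|_{2,\mu}\le C\|f\|_{2,\mu}\|g\|_{2,\mu}\|h\|_{1,\mu}$ --- so, $C_c^\infty(\R)$ being dense in both $L^2(\R,d\mu)$ and $L^1(\R,d\mu)$, it suffices to prove the identity for $f,g,h\in C_c^\infty(\R)$.

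The crux for such functions is that, writing $u=f\star g$ and using $\tau_0^{(\alpha,\beta)}=\mathrm{id}$ together with the substitution $y\mapsto-y$ (permissible since $d\mu$ is even), the left-hand side equals $\int_\R\check u(y)h(y)\,d\mu(y)=(u\star h)(0)=\bigl((f\star g)\star h\bigr)(0)$, whereas the right-hand side is directly $\bigl(f\star(g\star h)\bigr)(0)$. The identity thus reduces to the associativity $(f\star g)\star h=f\star(g\star h)$, which for $C_c^\infty$ functions follows from
\[\mathcal{H}\bigl((f\star g)\star h\bigr)=\mathcal{H}f\cdot\mathcal{H}g\cdot\mathcal{H}h=\mathcal{H}\bigl(f\star(g\star h)\bigr)\]
and the injectivity of $\mathcal{H}$. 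The only delicate point is the careful bookkeeping of the involution $f\mapsto\check f$ in passing between the pairing and the convolution evaluated at the origin.

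For (b) I would again begin with boundedness of both sides as bilinear maps into $L^\infty(\R,d\mu)$: Hausdorff--Young gives $\mathscr{H}_pf\cdot h\in L^1(\R,d\nu)$, and $\mathcal{I}$ maps $L^1(\R,d\nu)$ boundedly into $L^\infty(\R,d\mu)$ because $|\G_\lambda(x)|\le\G_0(x)$ is uniformly bounded; on the other side $\mathscr{I}_ph\in L^{p'}(\R,d\mu)$, so the convolution inequality with $r=\infty$ places $f\star\mathscr{I}_ph$ in $L^\infty$, both sides being controlled by $C\|f\|_{p,\mu}\|h\|_{p,\nu}$. I would then dispatch the case $p=2$ first: on the dense subspace given by $f\in C_c^\infty(\R)$ and $h\in\mathcal{H}(C_c^\infty(\R))$ --- dense in $L^2(\R,d\nu)$ by the Plancherel theorem --- one has $\mathcal{I}h\in C_c^\infty(\R)$, hence $f\star\mathcal{I}h\in C_c^\infty(\R)$ and
\[\mathcal{H}(f\star\mathcal{I}h)=\mathcal{H}f\cdot\mathcal{H}(\mathcal{I}h)=\mathcal{H}f\cdot h,\]
so that applying $\mathcal{I}$ and using $\mathcal{I}\mathcal{H}=\mathrm{id}$ on $C_c^\infty(\R)$ gives $f\star\mathcal{I}h=\mathcal{I}(\mathcal{H}f\cdot h)$; the boundedness above extends this to all $f,h\in L^2$.

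Finally, for $p\in[1,2)$ the identity holds whenever $f\in L^p\cap L^2(\R,d\mu)$ and $h\in L^p\cap L^2(\R,d\nu)$, since $\mathscr{H}_pf=\mathscr{H}_2f$ and $\mathscr{I}_ph=\mathscr{I}_2h$ on these intersections by the consistency proposition; a density argument in $L^p(\R,d\mu)$ and $L^p(\R,d\nu)$, together with the boundedness recorded above, then completes the proof. I expect the main obstacle to be this $p=2$ verification, where one must justify the convolution theorem and both inversion identities $\mathcal{H}\mathcal{I}=\mathrm{id}$ and $\mathcal{I}\mathcal{H}=\mathrm{id}$ on the chosen dense class and confirm the density of $\mathcal{H}(C_c^\infty(\R))$ in $L^2(\R,d\nu)$ --- precisely the content of the Plancherel theorem; the remaining interchanges of limits are routine, licensed by the Hausdorff--Young and convolution inequalities.
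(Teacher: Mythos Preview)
Your proposal is correct, and the overall strategy (reduce to a dense class where the multiplicativity of $\mathcal{H}$ and inversion are available, then extend by continuity via the convolution and Hausdorff--Young inequalities) matches the paper. There are, however, two organizational differences worth noting.

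For part~(a), the paper is extremely terse: it simply invokes Fubini's theorem and commutativity of $\star$, meaning one unfolds both sides as iterated integrals over the kernel $\mathcal{K}_{\alpha,\beta}$ and swaps the order of integration directly. Your route---rewriting each side as a triple convolution evaluated at $0$ and then invoking associativity via $\mathcal{H}((f\star g)\star h)=\mathcal{H}f\cdot\mathcal{H}g\cdot\mathcal{H}h=\mathcal{H}(f\star(g\star h))$ together with injectivity of $\mathcal{H}$---is valid but more roundabout: it imports the Plancherel/Paley--Wiener machinery to establish what is really a measure-theoretic identity. The payoff of your approach is that the careful bookkeeping with $\check{(\cdot)}$ and $\tau_0^{(\alpha,\beta)}=\mathrm{id}$ is made explicit, and the $C_0$ conclusion is argued in detail rather than left implicit.

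For part~(b), the paper argues directly for each fixed $p$: first $f,h\in C_c(\R)$ (on the spatial and spectral side respectively), then approximate $h$ in $L^p(\R,d\nu)$ with $f\in C_c(\R)$ fixed, then finally approximate $f$. You instead settle the case $p=2$ first on the dense class $f\in C_c^\infty(\R)$, $h\in\mathcal{H}(C_c^\infty(\R))$, and then deduce all $p\in[1,2)$ from the consistency proposition $\mathscr{H}_pf=\mathscr{H}_2f$, $\mathscr{I}_ph=\mathscr{I}_2h$ on $L^p\cap L^2$. Your organization is cleaner in one respect: by taking $h\in\mathcal{H}(C_c^\infty(\R))$ you have $\mathcal{I}h\in C_c^\infty(\R)$ immediately, so the identity $\mathcal{H}(\mathcal{I}h)=h$ is transparent, whereas the paper's choice $h\in C_c(\R)$ on the spectral side requires a word on why $\mathcal{H}(\mathscr{I}_ph)=h$ holds there. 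Both routes reach the same conclusion with comparable effort.
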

\begin{proof}
The first statement follows from an application of Fubini's theorem and commutativity of the convolution product.

As for the statement in (b), we observe that the inverse Cherednik--Opdam transform $\mathcal{I}(\mathscr{H}_pf\cdot h)$ is well-defined since $\|\mathscr{H}_pf\cdot h\|_{1,\nu}\leq \|\mathscr{H}_pf\|_{p^\prime}\|h\|_{p,\nu}<\infty$. Assume $f,h\in C_c(\R)$; then  $\mathcal{H}(f\star(\mathscr{I}_ph))=\mathcal{H}f\cdot \mathcal{H}(\mathscr{I}_ph) = \mathscr{H}_pf\cdot h$, and it follows from proposition \ref{prop-injective} that $\mathcal{I}(\mathscr{H}_pf\cdot h)=f\star(\mathscr{I}_ph)$ in this case.

For $f\in C_c(\R)$ and $h\in L^p(\R,d\nu)$ there is a sequence $\{h_n\}$ of functions $h_n\in C_c(\R)$ such that $\|h-h_n\|_{\infty,\nu}\to 0$ as $n\infty$. We have just seen that $\mathcal{I}(\mathscr{H}_pf\cdot h_n)=f\star(\mathscr{I}_ph_n)$, so several applications of the H\"older inequality, together with Young's inequality, lead to the estimate

\begin{multline*}
\|\mathcal{I}(\mathscr{H}_pf\cdot h)-f\star (\mathscr{I}_ph)\|_{\infty,\mu} \\
\begin{split} 
&= \|\mathcal{I}(\mathscr{H}_pf(h-h_n))+\mathcal{I}(\mathscr{H}_p h_n) - f\star (\mathscr{I}_p(h-h_n)) - f\star (\mathscr{I}_ph_n)\|_{\infty,\mu}\\
&\leq\|\mathcal{I}(\mathscr{H}_pf(h-h_n))-f\star(\mathscr{I}_p(h-h_n))\|_{\infty,\mu}\text{ since } \mathcal{I}(\mathscr{H}_pf h_n)=f\star(\mathscr{I}_ph_n)\\
&\leq \|\mathscr{H}_pf (h-h_n)\|_{1,\nu}+\|f\star(\mathscr{I}_ph)-f\star(\mathscr{I}_ph_n)\|_{\infty,\mu}\\
&\leq \|\mathscr{H}_pf\|_{p^\prime,\nu}\|h-h_n\|_{p,\nu} + \|f\|_{p,\mu}\|\mathscr{I}_p(h-h_n)\|_{p^\prime,\mu}\\
&\leq 2\|f\|_{p,\mu}\|h-h_n\|_{p,\nu}
\end{split}
\end{multline*}
For the general case where $f\in L^p(\R,d\mu)$ one chooses a sequence $\{f_n\}$ in $C_c(\R)$ such that $\|f-f_n\|_{p,\mu}\to 0$ as $n\to\infty$.	
\end{proof}

\begin{lemma}\label{lemma-help}
	\begin{enumerate}[label=(\roman*)]
	\item For every compact neighborhood $C$ of a given $\lambda\in\R$ there exists a net $\{f_i\}_n$ of functions $f_n\in C_c(\R)$ such that $\mathcal{H}f_i\to \mathbf{1}_C$ uniformly on compact subsets and $\|\mathcal{H}(f_i\star\overline{f}_i)-\mathbf{1}_C\|_{1,\nu}\to 0$ as $n\to\infty$.
			\item Assume $f\in L^p(\R,d\mu)$ for some $p\in[1,2]$ has the property that $\mathscr{H}_pf$ belongs to $L^2(\R,d\nu)$. Then $f\in L^2(\R,d\mu)$, and $f=\mathcal{I}(\mathscr{H}_pf)$ $\mu$-almost everywhere.
			\item Assume $h\in L^p(\R,d\nu)$ for some $p\in[1,2]$ has the property that $\mathscr{I}_ph$ belongs to $L^2(\R,d\mu)$. Then $h\in L^2(\R,d\nu)$ and $h=\mathcal{H}(\mathscr{I}_ph)$ $\nu$-almost everywhere.
	\end{enumerate}
\end{lemma}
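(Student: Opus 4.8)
The three assertions split into the approximate--identity construction (i) and the two a priori $L^2$--regularity statements (ii) and (iii); the substantive inputs are the Plancherel theorem, the Hausdorff--Young inequalities of Lemma~\ref{lemma-HY1}, and the Parseval relation made explicit in the proof of Proposition~\ref{prop-injective}. For (i) the plan is to realise $\mathbf{1}_C$ as a limit of transforms of $C_c^\infty$--functions. Since $T^{(\alpha,\beta)}$ has real coefficients one has $\overline{\G_\lambda}=\G_{-\lambda}$ for real $\lambda$, so the product formula gives
\[\mathcal{H}(f_i\star\overline f_i)(\lambda)=\mathcal{H}f_i(\lambda)\,\overline{\mathcal{H}f_i(-\lambda)}.\]
I would accordingly take $\mathcal{H}f_i$ real and even for a symmetric window $C$ (which is all the applications below require, where $C=[-n,n]$), so that $\mathcal{H}(f_i\star\overline f_i)=(\mathcal{H}f_i)^2\ge 0$. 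Mollifying $\mathbf{1}_C$ and inverting, and using the Paley--Wiener isomorphism of Theorem~\ref{thm.PW} to place the approximants $f_i$ in $C_c^\infty(\R)$, one arranges $\mathcal{H}f_i\to\mathbf{1}_C$ uniformly on compacta off $\partial C$ together with $\|\mathcal{H}f_i\|_{2,\nu}^2\to\nu(C)$. As $(\mathcal{H}f_i)^2\ge 0$ then converges pointwise to $\mathbf{1}_C$ with integrals converging to $\nu(C)$, Scheff\'e's lemma promotes this to $L^1(\R,d\nu)$--convergence, which is the second claim in (i), while the first is the local--uniform statement.

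For (ii) put $F=\mathscr{H}_pf$, which lies in $L^2(\R,d\nu)\cap L^{p'}(\R,d\nu)$ by hypothesis and Hausdorff--Young. For compact $C$ set $f_C=\mathcal{I}(F\mathbf{1}_C)$; as $F\mathbf{1}_C\in L^2(\R,d\nu)$, Plancherel gives $f_C\in L^2(\R,d\mu)$ with $\|f_C\|_{2,\mu}=\|F\mathbf{1}_C\|_{2,\nu}\le\|F\|_{2,\nu}$, and $f_C\to G:=\mathcal{I}(F)$ in $L^2(\R,d\mu)$ as $C\uparrow\R$. It remains to identify $G$ with $f$. For $g\in C_c^\infty(\R)$ the equality $T_g=\widehat T_g$ on $L^p$ from Proposition~\ref{prop-injective} reads
\[\int_\R f\,\overline g\,d\mu=\int_\R F\,\overline{\mathcal{H}\check g}\,d\nu,\]
while the polarised Plancherel formula yields $\int_\R G\,\overline g\,d\mu=\int_\R\mathcal{H}G\,\overline{\mathcal{H}\check g}\,d\nu=\int_\R F\,\overline{\mathcal{H}\check g}\,d\nu$, since $\mathcal{H}G=F$. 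Hence $f-G$ is orthogonal to $C_c^\infty(\R)$, and as $f-G\in L^1_{\mathrm{loc}}(\mu)$ this forces $f=G$ $\mu$--almost everywhere; thus $f\in L^2(\R,d\mu)$ and $f=\mathcal{I}(\mathscr{H}_pf)$. The net from (i) gives an alternative: by $\mathcal{I}(\mathscr{H}_pf\cdot h)=f\star\mathscr{I}_ph$ one has $f\star(f_i\star\overline f_i)=\mathcal{I}(F\cdot\mathcal{H}(f_i\star\overline f_i))\to f_C$, exhibiting $f_C$ as the frequency localisation of $f$; letting $C\uparrow\R$, so that $\mathcal{H}(f_i\star\overline f_i)\to 1$ turns $f_i\star\overline f_i$ into an approximate identity, then gives $f_C\to f$.

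For (iii) the same testing is unavailable, because the natural test functions $\mathcal{H}\check g$ now sit on the $\nu$--side and, being entire, are not compactly supported; so I would first upgrade $h$ to $L^2$ by a norm bound. Write $w=\mathscr{I}_ph\in L^2(\R,d\mu)$ and, to stay inside $L^2(\nu)$, truncate in both frequency and height: $h_n=h\cdot\mathbf{1}_{\{|\lambda|\le n,\ |h(\lambda)|\le n\}}$, which is bounded with support of finite $\nu$--measure, hence in $L^2(\R,d\nu)$. Put $w_n=\mathcal{I}h_n\in L^2(\R,d\mu)$, so $\mathcal{H}w_n=h_n$ and $\|w_n\|_{2,\mu}=\|h_n\|_{2,\nu}$. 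The dual Parseval relation $\int_\R\mathcal{H}u\,\overline h\,d\nu=\int_\R u\,\overline{\mathscr{I}_ph}\,d\mu$ (valid for $u\in L^2(\R,d\mu)$ with $\mathcal{H}u\in L^{p'}(\R,d\nu)$, proved on a dense set by Plancherel and extended by continuity exactly as in Proposition~\ref{prop-injective}), applied to $u=w_n$, gives
\[\|h_n\|_{2,\nu}^2=\int_\R h_n\,\overline h\,d\nu=\int_\R w_n\,\overline w\,d\mu\le\|h_n\|_{2,\nu}\,\|w\|_{2,\mu},\]
so $\|h_n\|_{2,\nu}\le\|w\|_{2,\mu}$ uniformly in $n$. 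Monotone convergence then yields $\|h\|_{2,\nu}\le\|w\|_{2,\mu}$, whence $h\in L^2(\R,d\nu)$; now $\mathscr{I}_ph=\mathcal{I}h$ and the $L^2$ inversion theorem give $h=\mathcal{H}(\mathscr{I}_ph)$.

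The main obstacle is the construction in (i): reconciling the local--uniform convergence of $\mathcal{H}f_i$ with the $L^1(\R,d\nu)$--convergence of $\mathcal{H}(f_i\star\overline f_i)$ in the presence of the non--symmetry $\mathcal{H}\overline f(\lambda)=\overline{\mathcal{H}f(-\lambda)}$, which is exactly what forces the real--even normalisation and the appeal to Scheff\'e's lemma; once (i) is in hand, the remaining double limit in the approximation index and in the exhausting window $C\uparrow\R$ is routine bookkeeping.
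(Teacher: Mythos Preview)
Your route differs substantially from the paper's, and for (ii) it is cleaner: the paper constructs an approximate identity $\{h_i\}$ from (i), shows $h_i\star f\to f$ in $L^p$ and simultaneously $h_i\star f\to\mathcal{I}(\mathscr{H}_pf)$ in $L^2$ via Plancherel, then identifies the two limits; you instead test $f-\mathcal{I}(\mathscr{H}_pf)$ against $C_c^\infty$ using the Parseval identity already established in Proposition~\ref{prop-injective}, which bypasses the approximate--identity machinery entirely. For (i) the paper fixes a small neighbourhood $U$ of $\lambda$, approximates $k=\nu(U)^{-1/2}\mathbf{1}_U$ in $L^2(\nu)$ by some $\mathcal{H}f$ (density via Plancherel), and passes to $L^1$ through the elementary bound $\bigl||\mathcal{H}f|^2-|k|^2\bigr|\le|\mathcal{H}f-k|\,(|\mathcal{H}f|+|k|)$ together with Cauchy--Schwarz. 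Your Scheff\'e argument is a legitimate alternative, and you are right to make explicit the non--symmetry $\mathcal{H}\overline f(\lambda)=\overline{\mathcal{H}f(-\lambda)}$, which the paper's assertion ``$\mathcal{H}(f\star\overline f)=|\mathcal{H}f|^2$'' passes over; restricting to real even windows is harmless here since your own (ii) and (iii) do not actually use (i).

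There is, however, a genuine gap in your (iii). The dual Parseval relation you invoke,
\[\int_\R\mathcal{H}u\,\overline{h}\,d\nu=\int_\R u\,\overline{\mathscr{I}_ph}\,d\mu\qquad\text{for }u\in L^2(\mu),\ \mathcal{H}u\in L^{p'}(\nu),\ h\in L^p(\nu),\]
cannot be obtained ``exactly as in Proposition~\ref{prop-injective}'' by continuity in $h$: the left side is continuous on $L^p(\nu)$ because $\mathcal{H}u\in L^{p'}(\nu)$, but the right side is $h\mapsto\langle u,\mathscr{I}_ph\rangle$ with $\mathscr{I}_ph\in L^{p'}(\mu)$, so its continuity on $L^p(\nu)$ requires $u\in L^p(\mu)$. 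For your choice $u=w_n=\mathcal{I}h_n$ with $h_n$ bounded and compactly supported, Hausdorff--Young only places $w_n$ in $L^q(\mu)$ for $q\ge 2$, not in $L^p(\mu)$ for $p\le 2$. Thus the key identity $\int h_n\overline h\,d\nu=\int w_n\overline w\,d\mu$ is unjustified as written; approximating $h$ by $L^2\cap L^p$ functions does not help either, since one then needs $\mathcal{I}h^{(k)}\to w$ in $L^2(\mu)$, which is precisely the $L^2$--membership of $h$ you are trying to establish. The paper's outlined approach to (iii) avoids this circularity by going through the approximate--identity net of (i) and the convolution identity $\mathcal{I}(\mathscr{H}_pf\cdot h)=f\star\mathscr{I}_ph$, rather than a direct Parseval pairing.
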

\begin{remark}
Statement (i) appears as \cite[Lemma~2.6]{HY-hyper} for hypergroups, including a reference with the proof. Although the convolution $\star$ presently does not define a hypergroup structure, it is still sensible to view the $G_\lambda$ as analogues of the characters. In the special case of Jacobi analysis this analogy is indeed correct.

We shall therefore merely outline the necessary adjustments to the proof of (i), (ii), and (iii) as they appeared in \cite{HY-hyper} that are required to take into account the minor differences.
\end{remark}
\begin{proof}[Proof of (i)]
Let $\lambda\in\R$ and $\epsilon>0$ be fixed, and let $C\subset\R$ be a compact subset. Since $\lambda\mapsto\G_\lambda(x)$ is continuous for fixed $x$, an equicontinuity argument establishes the existence of an open neighborhood $U$ of $\lambda$ in $\R$ such that $0<\nu(U)<\infty$ and 
\[U\subset \bigl\{\varphi\in\R\,:\, |\G_\lambda(x)-\G_\varphi(x)|<\epsilon/2\text{ for all } x\in C\bigr\}.\]
Defining $h=\nu(U)^{-1}\mathbf{1}_U$ (which belongs to $L^1(\R,d\nu)$) it follows for $x\in C$ that
\[\begin{split}
|\mathcal{I}h(x)-\G_\lambda(x)| &=\Bigl|\int_\R\G_\varphi(x)h(\varphi)\,d\nu(\varphi)-\G_\lambda(x)\Bigr|\\
&=\Bigl|\frac{1}{\nu(U)}\int_U\G_\varphi(x)\,d\nu(\varphi)-\frac{1}{\nu(U)}\int_U\G_\lambda(x)\,d\nu(\varphi)\Bigr|\\
&\leq \frac{1}{\nu(U)}\int_U|\G_\varphi(x)-\G_\lambda(x)|\,d\nu(\varphi)<\frac{\epsilon}{2}
\end{split}\]
by  the construction of the set $U$. Setting $k=\nu(U)^{-1/2}\mathbf{1}_U$, there exists a function $f\in C_c(\R)$ such that $\|\mathcal{H}f-k\|_{2,\nu}<\epsilon/4$. Since $\|k\|_{2,\nu}=1$, it suffices to consider the case there $\|\mathcal{H}f\|_{2,\nu}=1=\|f\|_{2,\mu}$. Because $f$ belongs to $C_c(\R)$, it holds that $\mathcal{H}(f\star\overline{f})=(\mathcal{H}f)\cdot (\overline{\mathcal{H}f})=|\mathcal{H}f|^2$, and therefore
\[\begin{split}
\|\mathcal{H}(f\star\overline{f})-h\|_{1,\nu} &=\bigl\| |\mathcal{H}f|^2-h\bigr\|_{1,\nu}\\
&\leq \int_\R|\mathcal{H}f(\varphi)-k(\varphi)|\cdot |\mathcal{H}(\overline{f})(\varphi)+k(\varphi)|\,d\nu(\varphi)\\
&\leq \|\mathcal{H}f-k\|_{2,\nu}\bigl(\|\mathcal{H}f\|_{2,\nu}+\|k\|_{2,\nu}\bigr)\\
&\leq 2\|\mathcal{H}f-k\|_{2,\nu} < \frac{\epsilon}{2}.
\end{split}\]
For arbitrary $x\in C$ it now holds that $|f\star\overline{f}(x)-\mathcal{I}h(x)|\leq\|\mathcal{H}(f\star\overline{f})-h\|_{1,\nu}<\epsilon/2$, whence $|f\star\overline{f}(x)-\G_\lambda(x)|\leq |f\star\overline{f}(x)-\mathcal{I}h(x)|+|\mathcal{I}h(x)-\G_\lambda(x)|<\epsilon$ uniformly in $x\in C$.
\end{proof}
\begin{proof}[Proof of (ii)]
Choose a net $\{h_i\}$ of functions in $C_c(\R)$ such that $h_i\star f\to f$ in $L^p(\R,d\mu)$ and such that $\mathcal{H}f_i$ converges uniformly on compact subsets of $\R$ to the identity. Moreover choose a net $(f_j)$ of functions in $C_c(\R)$ such that $\|f_j-f\|_{p,\mu}\to 0$. It follows from $h_i\star f$ belonging to $L^p(\R,d\mu)\cap C_0(\R)$ that $h_i\star f$ is in $L^2(\R,d\mu)$, implying that $\mathscr{H}_p(h_i\star f)=\mathcal{H}(h_i\star f)\in L^2(\R,d\nu)$. By the Hausdorff--Young inequality it holds that $\mathcal{H}(h_i\star f)$ also belongs to $L^{p^\prime}(\R,d\nu)$. Now
\begin{multline*}
\|\mathcal{H}(h_i\star f)-\mathscr{H}_p(h_i)\mathscr{H}_pf\|_{p^\prime,\nu} \\
\begin{split}
& \leq \|\mathcal{H}(h_i\star f)-\mathscr{H}_p(h_i\star f_j)\|_{p^\prime,\nu} + \|\mathscr{H}_p(h_i\star f_j)-\mathscr{H}_p(h_i)\mathscr{H}_p(f)\|_{p^\prime,\nu}\\
&=\|\mathcal{H}h_i\cdot(\mathcal{H}f-\mathscr{H}_pf_j)\|_{p^\prime,\nu} + \|\mathscr{H}_p(h_i)\cdot\mathscr{H}_p(f_j-f)\|_{p^\prime,\nu}\\
&\longrightarrow 0 
\end{split}\end{multline*}
from which we conclude that $\mathcal{H}(h_i\star f)=\mathscr{H}_p(f_i)\cdot\mathscr{H}_p(f)$ $\nu$-almost everywhere. Therefore, according to the Plancherel theorem, 
\[\|k_i\star f-\mathcal{I}(\mathscr{H}_p(f))\|_{2,\mu}=\|\mathcal{H}(k_i\star f)-\mathscr{H}_pf\|_{2,\nu} = \|\mathcal{H}h_i\cdot\mathscr{H}_pf\|_{2,\nu}.\]
By using (i) we can choose the compact set $C$ in the beginning of the proof so that
\begin{multline*}
\int_C |(\mathcal{H}h_i-1)(\lambda)\mathscr{H}_pf(\lambda)|^2\,d\nu(\lambda) + \int_{\complement C}|(\mathcal{H}h_i-1)(\lambda)\mathscr{H}_pf(\lambda)|^2\,d\nu(\lambda) \\ < \int_C |(\mathcal{H}h_i-1)(\lambda)\mathscr{H}_pf(\lambda)|^2\,d\nu(\lambda) + \frac{\epsilon}{2} \longrightarrow \frac{\epsilon}{2}.\end{multline*}
In particular $\|k_i\star f-\mathcal{I}(\mathscr{H}_pf)\|_{2,\mu}\to 0$, implying that $f=\mathcal{I}(\mathscr{H}_pf)$ $\mu$-almost everywhere.\end{proof}
\begin{proof}[Outline of proof for (iii)]
Let $C$ be a compact neighborhood of a fixed $\lambda\in\R$ and use (i) to produce a net $\{f_i\}$ in $C_c(\R)$ such that  $\|\mathcal{H}(f_i\star\overline{f}_i)-\mathbf{1}_C\|_{1,\nu}\to 0$. For a given $i$, let $h=\mathcal{H}(f_i\star\overline{f}_i)$; then $h$ belongs to $L^1(\R,d\nu)\cap L^2(\R,d\nu)$ by Plancherel and moreover to $C_c(\R)$. At this point one can repeat the argument in \cite{HY-hyper}. We leave the details to the interested reader.
\end{proof}
The following result is the first main theorem and is analogous to \cite[Theorem~2.8]{HY-hyper}.
\begin{theorem} Assume $\alpha\geq\beta\geq -\frac{1}{2}$ with $\alpha>-\frac{1}{2}$, and let $p,r\in[1,2]$.
\begin{enumerate}[label=(\roman*)]
\item For $f\in L^p(\R,d\mu)$ such that $\mathscr{H}_pf\in L^r(\R,d\nu)$ it follows that $\mathscr{I}_r(\mathscr{H}_pf)=f$ in $L^p(\R,d\mu)$.
\item For $h\in L^p(\R,d\nu)$ such that $\mathscr{I}_p\in L^r(\R,d\mu)$ it follows that $\mathscr{H}_r(\mathscr{I}_ph)=h$ in $L^p(\R,d\nu)$. 
\end{enumerate}
\end{theorem}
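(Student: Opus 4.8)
The plan is to reduce both assertions to the $L^2$-theory—where $\mathcal{I}\mathcal{H}=\mathrm{id}$ and $\mathcal{H}\mathcal{I}=\mathrm{id}$ are already available—by smoothing with an approximate identity and exploiting that $\mathcal{H}$ turns $\star$ into pointwise multiplication. First I would fix a sequence $\{k_i\}\subset C_c(\R)$ of the type furnished by Lemma \ref{lemma-help}(i), e.g. $k_i=f_i\star\overline{f_i}$, calibrated so that $k_i\star u\to u$ in $L^s(\R,d\mu)$ for every $s\in[1,\infty)$ and $u$ in that space, while $\mathcal{H}k_i\to 1$ uniformly on compacta and $\nu$-almost everywhere, with $\sup_i\|\mathcal{H}k_i\|_{\infty,\nu}<\infty$. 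Two facts will be used repeatedly: the multiplicativity $\mathcal{H}(k_i\star u)=\mathcal{H}k_i\cdot\mathscr{H}_s u$ (extended from $C_c(\R)$ to $u\in L^s$ exactly as in the proof of Lemma \ref{lemma-help}(ii)), and that $k_i\star u\in L^2(\R,d\mu)$ whenever $u\in L^s$ with $s\in[1,2]$, since $k_i\in C_c(\R)\subset L^1\cap L^{s'}$ and the convolution inequality then places $k_i\star u$ in $L^s\cap L^\infty\subset L^2$.

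For (i), let $f\in L^p(\R,d\mu)$ with $\mathscr{H}_pf\in L^r(\R,d\nu)$ and put $G=\mathscr{I}_r(\mathscr{H}_pf)\in L^{r'}(\R,d\mu)$. Since $k_i\star f\in L^2$, the $L^2$-inversion together with multiplicativity gives $k_i\star f=\mathcal{I}(\mathcal{H}k_i\cdot\mathscr{H}_pf)$. On the other hand, the identity $\mathcal{I}(\mathscr{H}_sf\cdot h)=f\star\mathscr{I}_sh$ established above, applied with $s=r$ to the pair $k_i\in L^r(\R,d\mu)$, $\mathscr{H}_pf\in L^r(\R,d\nu)$ and using $\mathscr{H}_rk_i=\mathcal{H}k_i$, gives $\mathcal{I}(\mathcal{H}k_i\cdot\mathscr{H}_pf)=k_i\star G$. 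Hence $k_i\star f=k_i\star G$ for every $i$. Letting $i\to\infty$, $k_i\star f\to f$ in $L^p$, so some subsequence converges to $f$ $\mu$-almost everywhere; as $k_i\star G=k_i\star f$ and $k_i\star G\to G$ (in $L^{r'}$ when $1<r\le 2$, since then $r'<\infty$, and pointwise when $r=1$, where $\mathscr{H}_pf\in L^1(\R,d\nu)$ forces $G=\mathcal{I}(\mathscr{H}_pf)\in C_0(\R)$), a further extraction yields $f=G$ $\mu$-almost everywhere, i.e. $\mathscr{I}_r(\mathscr{H}_pf)=f$ in $L^p(\R,d\mu)$.

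For (ii), let $h\in L^p(\R,d\nu)$ with $g:=\mathscr{I}_ph\in L^r(\R,d\mu)$. The same identity, now with $s=p$ applied to $k_i$ and $h$ (using $\mathscr{H}_pk_i=\mathcal{H}k_i$), gives $k_i\star g=\mathcal{I}(\mathcal{H}k_i\cdot h)$; here $\mathcal{H}k_i\cdot h\in L^1\cap L^2(\R,d\nu)$ because $\mathcal{H}k_i$ is rapidly decreasing and $h\in L^p$. Applying $\mathcal{H}$ and invoking $\mathcal{H}\mathcal{I}=\mathrm{id}$ on $L^2$ yields $\mathcal{H}(k_i\star g)=\mathcal{H}k_i\cdot h$, while multiplicativity gives $\mathcal{H}(k_i\star g)=\mathcal{H}k_i\cdot\mathscr{H}_rg$. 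Comparing, $\mathcal{H}k_i\cdot(\mathscr{H}_rg-h)=0$ $\nu$-almost everywhere for each $i$; since $\mathcal{H}k_i\to1$ $\nu$-almost everywhere, at $\nu$-almost every point some $\mathcal{H}k_i$ is nonzero, forcing $\mathscr{H}_rg=h$. Thus $\mathscr{H}_r(\mathscr{I}_ph)=h$ in $L^p(\R,d\nu)$.

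The principal difficulty is the construction and calibration of the approximate identity: because $\star$ does not come from a hypergroup, one must verify directly that a single sequence $\{k_i\}\subset C_c(\R)$ simultaneously acts as an approximate unit on each $L^s(\R,d\mu)$ and satisfies $\mathcal{H}k_i\to1$ $\nu$-almost everywhere with uniformly bounded transforms—this last property being precisely what licenses the division step in (ii). The remaining ingredients, namely the passage of $\mathcal{H}(k_i\star u)=\mathcal{H}k_i\cdot\mathscr{H}_su$ from $C_c(\R)$ to $L^s$ and the reconciliation in (i) of limits taken in two different $L^s$-topologies, are routine and follow the template already set in the proof of Lemma \ref{lemma-help}(ii).
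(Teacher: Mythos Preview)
Your argument is basically workable, but it takes a substantially longer route than the paper and overlooks the one-line reduction that makes the theorem immediate. The paper simply observes that, by Hausdorff--Young, $\mathscr{H}_pf\in L^{p'}(\R,d\nu)$, while by hypothesis $\mathscr{H}_pf\in L^r(\R,d\nu)$; since $r\le 2\le p'$, interpolation gives $\mathscr{H}_pf\in L^2(\R,d\nu)$. Lemma~\ref{lemma-help}(ii) then yields $f\in L^2(\R,d\mu)$ with $f=\mathcal{I}(\mathscr{H}_pf)$ $\mu$-almost everywhere, and the consistency proposition (that $\mathscr{I}_r$ agrees with $\mathscr{I}_2$ on $L^r\cap L^2$) finishes. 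Part (ii) is symmetric via Lemma~\ref{lemma-help}(iii). In other words, all the approximate-identity work has already been packaged into Lemma~\ref{lemma-help}; the theorem is just the observation that the extra integrability hypothesis forces $L^2$-membership.

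Your approach, by contrast, effectively re-derives the content of Lemma~\ref{lemma-help}(ii),(iii) from scratch in slightly greater generality. This is not wrong, but a few of the properties you attribute to the net $\{k_i\}$ go beyond what Lemma~\ref{lemma-help}(i) actually supplies: that lemma produces a net with $\mathcal{H}(f_i\star\overline{f_i})\to\mathbf{1}_C$ in $L^1(\R,d\nu)$ for a \emph{fixed} compact neighborhood $C$, not $\mathcal{H}k_i\to 1$ $\nu$-almost everywhere; and ``$\mathcal{H}k_i$ rapidly decreasing'' requires $k_i\in C_c^\infty(\R)$, whereas the construction only gives $k_i\in C_c(\R)$. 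Both points can be repaired (take $f_i$ smooth, let $C$ exhaust $\R$), but they are exactly the calibration issues you flag as the ``principal difficulty'', and the paper's route avoids them entirely by never leaving $L^2$.
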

\begin{proof}
In the first case $\mathscr{H}_pf$ belongs to $L^{p^\prime}(\R,d\nu)\cap L^r(\R,d\nu)\subset L^2(\R,d\nu)$, implying that $\mathscr{H}_pf=\mathcal{H}f$. But then $f=\mathcal{I}(\mathcal{H}f)=\mathscr{I}_{p^\prime}(\mathscr{H}_pf)$ according to proposition \ref{lemma-help}.

The second statement is proved analogously.
\end{proof}

\section{Paley--Wiener theorems for the inverse Cherednik--Opdam transform}\label{section.PW}
The present section establishes several Paley--Wiener-type results for the inverse transform $\mathcal{I}$. We shall follow \cite[Section~4]{Andersen-JMAA} closely, although some minor differences occur due to the Plancherel measure $d\nu$ being non-symmetric and the $\G_\lambda$ being more complicated.

\begin{definition}
The support of $g\in L^2(\R,d\mu)$ is the smallest closed set, on the complement of which $g$ vanishes almost everywhere. The $\emph{radius}$ of the support of $g$ is defined by
\[R_g:=\sup_{\lambda\in\mathrm{supp}\,g}|\lambda|.\]
\end{definition}

The real Paley--Wiener theorems hinge on the following description of $R_g$, cf. \cite[Lemma~5]{Andersen-JMAA} and the references quoted, as well as \cite[Theorem~4.1]{Andersen-PW-Dunkl} and \cite[Theorem~4.1]{Chettaoui-Trimeche} in the Dunkl-case.  The proof goes through without any change.

\begin{lemma}\label{lemma-Rg}
Assume $g\in L^2(\R,d\nu)$ has the property that $|\cdot|^{2n}g\in L^2(\R,d\nu)$ for all $n\in\mathbb{N}_0$. Then
\[R_g=\lim_{n\to\infty}\Bigl(\int_\R|\lambda|^{4n}|g(\lambda)|^2\,d\nu(\lambda)|\Bigr)^{\frac{1}{4n}}.\]
\end{lemma}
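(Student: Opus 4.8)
The plan is to reinterpret the quantity as an $L^p$-norm against the finite measure $d\sigma := |g|^2\,d\nu$ and then to invoke the classical fact that $L^p$-norms increase to the $L^\infty$-norm as $p\to\infty$, the latter being exactly $R_g$. Since $g\in L^2(\R,d\nu)$, the measure $\sigma$ is a finite positive measure with total mass $\|g\|_{2,\nu}^2$, and
\[
\int_\R |\lambda|^{4n}|g(\lambda)|^2\,d\nu(\lambda) = \int_\R |\lambda|^{4n}\,d\sigma(\lambda),
\]
so that the $4n$-th root of the left-hand side is precisely $\| \,|\cdot|\, \|_{L^{4n}(\sigma)}$. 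It therefore suffices to show that these $L^{4n}(\sigma)$-norms converge to $R_g$ as $n\to\infty$. The hypothesis $|\cdot|^{2n}g\in L^2(\R,d\nu)$ for all $n$ guarantees that every integral below is finite, so no integrability issue arises.

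For the lower bound, fix any $t$ with $0\le t<R_g$. By the definition of $R_g$ there is a point $\lambda_0\in\mathrm{supp}\,g$ with $|\lambda_0|>t$, and since $\mathrm{supp}\,g$ is the smallest closed set off which $g$ vanishes $\nu$-almost everywhere, every open neighborhood of $\lambda_0$ carries positive $\sigma$-mass. As the open set $\{\,|\lambda|>t\,\}$ is such a neighborhood, the constant $c_t:=\sigma(\{\,|\lambda|>t\,\})$ is strictly positive, whence
\[
\Bigl(\int_\R |\lambda|^{4n}\,d\sigma\Bigr)^{1/(4n)} \ge \Bigl(\int_{\{|\lambda|>t\}}|\lambda|^{4n}\,d\sigma\Bigr)^{1/(4n)} \ge t\,c_t^{1/(4n)}\longrightarrow t
\]
as $n\to\infty$. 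Letting $t\uparrow R_g$ gives $\liminf_n(\int_\R|\lambda|^{4n}\,d\sigma)^{1/(4n)}\ge R_g$.

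For the upper bound I distinguish two cases. If $R_g=\infty$, the lower bound already forces the limit to be $+\infty$. If $R_g<\infty$, then $|\lambda|\le R_g$ holds $\sigma$-almost everywhere by the definition of the support, so
\[
\Bigl(\int_\R |\lambda|^{4n}\,d\sigma\Bigr)^{1/(4n)} \le \bigl(R_g^{4n}\,\sigma(\R)\bigr)^{1/(4n)} = R_g\,\|g\|_{2,\nu}^{1/(2n)}\longrightarrow R_g.
\]
Combining the two estimates shows that the limit exists and equals $R_g$. The only point requiring genuine care — and the step I would single out as the crux — is the measure-theoretic identification of $\sup_{\lambda\in\mathrm{supp}\,g}|\lambda|$ with the $\sigma$-essential supremum of $|\lambda|$, together with the separate treatment of the possibly infinite case $R_g=\infty$; everything else is the standard passage from $L^p$ to $L^\infty$.
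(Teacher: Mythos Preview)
Your proof is correct and follows essentially the same route as the paper's: the paper also splits into the compactly supported and unbounded-support cases, obtains the upper bound from $|\lambda|\le R_g$ on the support, and the lower bound from the fact that $\int_{\{|\lambda|\ge t\}}|g|^2\,d\nu>0$ for every $t<R_g$. Your repackaging via the finite measure $d\sigma=|g|^2\,d\nu$ and the $L^{4n}(\sigma)\to L^\infty(\sigma)$ heuristic is a clean way to phrase the same two estimates, but the underlying argument is identical.
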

\begin{proof}
The conclusion being trivial for $g=0$ (in which case $R_g=0$), we assume without loss of generality that $\|g\|_{2,\nu}\neq 0$.

First consider the case where $g$ has compact support with $R_g>0$. Since 
\[\Bigl(\int_\R|\lambda|^{4n}|g(\lambda)|^2\,d\nu(\lambda)\Bigr)^{\frac{1}{4n}}\leq R_g\Bigl(\int_{\vert\lambda\vert\leq R_g}\lambda|^{4n}|g(\lambda)|^2\,d\nu(\lambda)\Bigr)^{\frac{1}{4n}},\]
it follows that
\[\limsup_{n\to\infty}\Bigl(\int_\R|\lambda|^{4n}|g(\lambda)|^2\,d\nu(\lambda)\Bigr)^{\frac{1}{4n}}\leq R_g\limsup_{n\to\infty} \Bigl(\int_{\vert\lambda\vert\leq R_g}\lambda|^{4n}|g(\lambda)|^2\,d\nu(\lambda)\Bigr)^{\frac{1}{4n}} = R_g.\]

On the other hand
\[\int_{R_g-\epsilon\leq|\lambda|\leq R_g}|g(\lambda)|^2\,d\nu(\lambda)>0\text{ for every }\epsilon>0,\]
from which it follows that
\[\begin{split}
\liminf_{n\to\infty}\Bigl(\int_\R|\lambda|^{4n}|g(\lambda)|^2\,d\nu(\lambda)\Bigr)^{\frac{1}{4n}} & \geq \liminf_{n\to\infty}\Bigl(\int_{R_g-\epsilon\leq|\lambda|\leq R_g}|\lambda|^{4n}|g(\lambda)|^2\,d\nu(\lambda)\Bigr)^{\frac{1}{4n}}\\
&\leq (R_g-\epsilon)\liminf_{n\to\infty}\Bigl(\int_{R_g-\epsilon\leq|\lambda|\leq R_g}|g(\lambda)|^2\,d\nu(\lambda)\Bigr)^{\frac{1}{4n}} \\ & = R_g-\epsilon,
\end{split}\]
Therefore $\displaystyle \lim_{n\to\infty}\Bigl(\int_\R|\lambda|^{4n}|g(\lambda)|^2\,d\nu(\lambda)\Bigr)^{\frac{1}{4n}} =R_g$, which was the desired conclusion.
\medskip

Now assume $g$ has unbounded support, and note that
\[\int_{|\lambda|\geq N}|g(\lambda)|^2\,d\nu(\lambda)>0\text{ for every } N>0.\]
But then
\[\begin{split}
\liminf_{n\to\infty}\Bigl(\int_\R|\lambda|^{4n}|g(\lambda)|^2\,d\nu(\lambda)\Bigr)^{\frac{1}{4n}}&\geq \liminf_{n\to\infty}\Bigl(\int_{|\lambda|\geq N}|\lambda|^{4n}|g(\lambda)|^2\,d\nu(\lambda)\Bigr)^{\frac{1}{4n}}\\
&\leq N\cdot \liminf_{n\to\infty}\Bigl(\int_{|\lambda|\geq N}|g(\lambda)|^2\,d\nu(\lambda)\Bigr)^{\frac{1}{4n}} = N
\end{split}\]
for every $N>0$, implying that
\[\liminf_{n\to\infty}\Bigl(\int_\R|\lambda|^{4n}|g(\lambda)|^2\,d\nu(\lambda)\Bigr)^{\frac{1}{4n}} = +\infty.\]
\end{proof}

\begin{definition}
The real $L^2$-based Paley--Wiener space $\mathrm{PW}^2(\R)$ is the space of smooth functions $f$ on $\R$ satisfying
\begin{enumerate}[label=(\roman*)]
\item $\mathcal{L}^nf\in L^2(\R,d\mu)$ for every $n\in\mathbb{N}_0$,
\item $R_f^\mathcal{L}:=\liminf_{n\to\infty}\|\mathcal{L}^nf\|_{2,\mu}^{1/2n}<\infty$.
\end{enumerate}
In addition, define $\mathrm{PW}^2_R(\R)=\{f\in\mathrm{PW}^2(\R)\,:\,R_f^\mathcal{L}=R\}$ for $R\geq 0$, $L^2_c(\R)$ the space of functions in $L^2(\R,d\nu)$ with compact essential support, and $L^2_R(\R)=\{g\in L^2_c(\R)\,:\,R_g=g\}$.
\end{definition}

\begin{theorem}\label{thm-PW1}
The inverse Cherednik--Opdam transform $\mathcal{I}$ is bijective from $L_c^2(\R)$ onto $\mathrm{PW}^2(\R)$, and from $L_R^2(\R)$ onto $\mathrm{PW}^2_R(\R)$.
\end{theorem}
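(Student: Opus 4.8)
The plan is to use the Plancherel theorem, under which $\mathcal{H}$ is an isometry of $L^2(\R,d\mu)$ onto $L^2(\R,d\nu)$ with inverse $\mathcal{I}$, in order to transport the two defining conditions of $\mathrm{PW}^2(\R)$ to the side of $g=\mathcal{H}f$. The engine is the pair of identities already available: Lemma \ref{gen.inv}, which rewrites $\|\mathcal{L}^nf\|_{2,\mu}^2$ as the weighted moment $\int_\R|\lambda|^{4n}|\mathcal{H}f(\lambda)|^2\,d\nu$, and Lemma \ref{lemma-Rg}, which recovers the support radius $R_g$ from exactly these moments. The whole statement will follow once I show that $R_f^{\mathcal{L}}$ and $R_g$ coincide whenever one of them is finite.

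\emph{From $L^2_c(\R)$ into $\mathrm{PW}^2(\R)$.} Let $g\in L^2_c(\R)$ with essential support in $[-R,R]$ and put $f=\mathcal{I}g$. Since $\nu$ is finite on compact sets, $g\in L^1(\R,d\nu)$, so $f$ is given by an absolutely convergent integral of $g(\lambda)\G_\lambda(x)$ over the support of $g$. Using that $x\mapsto\G_\lambda(x)$ and all its $x$-derivatives are continuous and locally bounded uniformly for $\lambda$ in that compact support, I would differentiate under the integral sign to obtain $f\in C^\infty(\R)$ and, from $\mathcal{L}_x\G_\lambda(x)=-\lambda^2\G_\lambda(x)$,
\[\mathcal{L}^nf=\mathcal{I}\bigl((-1)^n\lambda^{2n}g\bigr),\qquad n\in\mathbb{N}_0.\]
As $\lambda^{2n}g$ again lies in $L^2_c(\R)$, the Plancherel theorem gives $\mathcal{L}^nf\in L^2(\R,d\mu)$ for every $n$, which is condition (i). Condition (ii) then follows from Lemma \ref{gen.inv} combined with Lemma \ref{lemma-Rg}:
\[R_f^{\mathcal{L}}=\liminf_{n\to\infty}\|\mathcal{L}^nf\|_{2,\mu}^{1/2n}=\lim_{n\to\infty}\Bigl(\int_\R|\lambda|^{4n}|g(\lambda)|^2\,d\nu\Bigr)^{1/4n}=R_g\leq R<\infty.\]
Hence $f\in\mathrm{PW}^2(\R)$, and in fact $R_f^{\mathcal{L}}=R_g$.

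\emph{From $\mathrm{PW}^2(\R)$ back, and bijectivity.} Let $f\in\mathrm{PW}^2(\R)$ and set $g=\mathcal{H}f$. Condition (i) with $n=0$ gives $f\in L^2(\R,d\mu)$, so $g\in L^2(\R,d\nu)$ and $\mathcal{I}g=f$ by Plancherel inversion. Condition (i) together with Lemma \ref{gen.inv} shows that every moment $\int_\R|\lambda|^{4n}|g(\lambda)|^2\,d\nu$ is finite, so Lemma \ref{lemma-Rg} applies and the $\liminf$ defining $R_f^{\mathcal{L}}$ is in fact a genuine limit equal to $R_g$; thus $R_g=R_f^{\mathcal{L}}<\infty$ and $\supp g\subseteq[-R_g,R_g]$, i.e. $g\in L^2_c(\R)$. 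This establishes surjectivity of $\mathcal{I}$ onto $\mathrm{PW}^2(\R)$; injectivity is immediate because $\mathcal{I}$ is the inverse of $\mathcal{H}$ on $L^2$. Finally, the identity $R_f^{\mathcal{L}}=R_g$, which holds in both directions, shows at once that $\mathcal{I}$ restricts to a bijection of $L^2_R(\R)$ onto $\mathrm{PW}^2_R(\R)$.

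I expect the main obstacle to be the smoothness of $\mathcal{I}g$ and the justification of differentiation under the integral sign: this requires uniform bounds (for $\lambda$ over the compact support, locally in $x$) on the $x$-derivatives of $\G_\lambda(x)$, which I would extract from the explicit description of $\G_\lambda$ in terms of Jacobi functions together with the fact that the Plancherel density $(1-\rho/i\lambda)/(8\pi|\cfct_{\alpha,\beta}(\lambda)|^2)$ stays locally integrable near $\lambda=0$. A secondary point that must be handled with care is the passage from the $\liminf$ in the definition of $R_f^{\mathcal{L}}$ to the honest limit furnished by Lemma \ref{lemma-Rg}; this is legitimate precisely because finiteness of all moments is guaranteed by condition (i).
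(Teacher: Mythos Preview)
Your proof is correct and follows essentially the same route as the paper: both directions rely on the identity $\mathcal{H}(\mathcal{L}^nf)=(-1)^n\lambda^{2n}\mathcal{H}f$ (Lemma~\ref{gen.inv}) together with the moment characterization of $R_g$ (Lemma~\ref{lemma-Rg}), and smoothness of $\mathcal{I}g$ is obtained from $|\cdot|^n g\in L^1\cap L^2$. Your write-up is in fact slightly more careful than the paper's in flagging the differentiation-under-the-integral step and the passage from $\liminf$ to an honest limit.
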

\begin{proof}
If $g\in L_R^2(\R)$, then $|\cdot|^n g$ belongs to $L^1(\R,d\nu)\cap L^2(\R,d\nu)$ for every $n\in\mathbb{N}_0$ and $\mathcal{I}g$ is therefore a smooth function on $\R$. Moreover $\mathcal{L}^n(\mathcal{I}g)=\mathcal{I}((-1)^n(\cdot)^{2n}g)\in L^2(\R,d\mu)$ for all $n\in\mathbb{N}_0$ by definition of $\mathcal{I}$ and the identity $\mathcal{L}\G_\lambda=-\lambda^2\G_\lambda$. We conclude from lemma \ref{lemma-help} that 
\[\liminf_{n\to\infty}\Bigl(\int_\R|\mathcal{L}^n(\mathcal{I}g)(x)|^2\,d\mu(x)\Bigr)^{\frac{1}{4n}} = \liminf_{n\to\infty}\Bigl(\int_\R|\lambda|^{4n}|g(\lambda)|^2\,d\nu(\lambda)\Bigr)^{\frac{1}{4n}} = R,\]
which means that $\mathcal{I}g$ belongs to $\mathrm{PW}^2_R(\R)$.
\medskip

If on the other hand $f\in\mathrm{PW}^2_R(\R)$, then $\mathcal{H}(\mathcal{L}^nf)(\lambda)=(-1)^n\lambda^{2n}\curlH{f}(\lambda)\in L^2(\R,d\nu)$ for all $n\in\mathbb{N}_0$. Therefore
\[\lim_{n\to\infty}\Bigl(\int_\R|\lambda|^{4n}|\mathcal{H}f(\lambda)|^2\,d\nu(\lambda)\Bigr)^{\frac{1}{4n}} = \lim_{n\to\infty}\Bigl(\int_\R|\mathcal{L}^nf(x)|^2\,d\mu(x)\Bigr)^{\frac{1}{4n}}=R,\]
that is, $\curlH{f}$ has compact support and $R_{\curlH{f}}=R$.
\end{proof}

As in \cite[Corollary~8]{Andersen-JMAA} one obtains the following

\begin{corollary}
Let $g$ be a measurable function on $\R$. Then $|\cdot|^ng$ belongs to $L^2(\R,d\nu)$ for all $n\in\mathbb{N}_0$ if and only if $\mathcal{L}^n\mathcal{I}g$ belongs to $L^2(\R,d\mu)$ for all $n\in\mathbb{N}_0$.
\end{corollary}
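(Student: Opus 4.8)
The plan is to deduce both implications from the identity of Lemma~\ref{gen.inv}, namely $\int_\R|\mathcal{L}^nf(x)|^2\,d\mu(x)=\int_\R|\lambda|^{4n}|\curlH{f}(\lambda)|^2\,d\nu(\lambda)$, applied to $f=\mathcal{I}g$, for which $\curlH{(\mathcal{I}g)}=g$ whenever $\mathcal{I}g\in L^2(\R,d\mu)$. Before doing so I would record the elementary observation that $|\cdot|^ng\in L^2(\R,d\nu)$ for every $n\in\mathbb{N}_0$ if and only if $|\cdot|^{2n}g\in L^2(\R,d\nu)$ for every $n\in\mathbb{N}_0$: the forward implication is trivial, and the converse follows from the Cauchy--Schwarz estimate $\int_\R|\lambda|^{2m+1}|g|^2\,d\nu\leq\bigl(\int_\R|\lambda|^{2m}|g|^2\,d\nu\bigr)^{1/2}\bigl(\int_\R|\lambda|^{2m+2}|g|^2\,d\nu\bigr)^{1/2}$. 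This lets me pass freely between arbitrary powers and the even powers that appear in Lemma~\ref{gen.inv}.

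For the forward implication, assume $|\cdot|^ng\in L^2(\R,d\nu)$ for all $n$. As in the proof of Theorem~\ref{thm-PW1} I would first upgrade this to $|\cdot|^ng\in L^1(\R,d\nu)\cap L^2(\R,d\nu)$ for all $n$; since $d\nu$ has only polynomial growth, the estimate $\int_\R|\lambda|^n|g|\,d\nu\leq\bigl(\int_\R(1+\lambda^2)^N|\lambda|^{2n}|g|^2\,d\nu\bigr)^{1/2}\bigl(\int_\R(1+\lambda^2)^{-N}\,d\nu\bigr)^{1/2}$ is finite once $N$ is large enough. Consequently $\mathcal{I}g$ is a smooth function and, by the eigenvalue relation $\mathcal{L}\G_\lambda=-\lambda^2\G_\lambda$, one has $\mathcal{L}^n(\mathcal{I}g)=(-1)^n\mathcal{I}\bigl((\cdot)^{2n}g\bigr)$. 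Since $(\cdot)^{2n}g\in L^2(\R,d\nu)$, the Plancherel theorem gives $\mathcal{L}^n(\mathcal{I}g)\in L^2(\R,d\mu)$ for every $n$, as required.

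For the converse, assume $\mathcal{L}^n(\mathcal{I}g)\in L^2(\R,d\mu)$ for all $n$. Taking $n=0$ shows $\mathcal{I}g\in L^2(\R,d\mu)$, so $g=\curlH{(\mathcal{I}g)}\in L^2(\R,d\nu)$ and $\mathcal{I}g$ is well-defined. Applying Lemma~\ref{gen.inv} to $f=\mathcal{I}g$ then yields $\int_\R|\lambda|^{4n}|g(\lambda)|^2\,d\nu(\lambda)=\int_\R|\mathcal{L}^n(\mathcal{I}g)(x)|^2\,d\mu(x)<\infty$ for every $n$, whence $|\cdot|^{2n}g\in L^2(\R,d\nu)$, and the interpolation step above promotes this to $|\cdot|^ng\in L^2(\R,d\nu)$ for all $n$. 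The step requiring the most care is the applicability of Lemma~\ref{gen.inv} here, since that lemma presupposes $f$ to be smooth: I would justify it by showing that $\mathcal{L}^n(\mathcal{I}g)\in L^2(\R,d\mu)$ for all $n$ forces $\mathcal{I}g$ to lie in every power of the domain of the self-adjoint closure of $\mathcal{L}$, so that $\curlH{(\mathcal{L}^n\mathcal{I}g)}=(-\lambda^2)^ng$ holds by the spectral calculus even before establishing classical smoothness; alternatively one argues smoothness directly from the $L^2$-regularity of the differential part of $\mathcal{L}$ together with control of its reflection term. Everything else is bookkeeping of powers and direct appeals to the Plancherel theorem.
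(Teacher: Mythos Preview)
Your argument is correct and is precisely the route the paper intends: the corollary is stated without proof and referred to \cite[Corollary~8]{Andersen-JMAA}, and the content is exactly the combination of Lemma~\ref{gen.inv} with the observations already recorded in the proof of Theorem~\ref{thm-PW1}. Your forward implication reproduces the first paragraph of that proof verbatim (polynomial growth of $d\nu$ gives $|\cdot|^ng\in L^1\cap L^2$, hence $\mathcal{I}g$ is smooth and $\mathcal{L}^n\mathcal{I}g=(-1)^n\mathcal{I}((\cdot)^{2n}g)\in L^2$), and your converse is Lemma~\ref{gen.inv} applied to $f=\mathcal{I}g$.

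One remark: your caution about the applicability of Lemma~\ref{gen.inv} in the converse direction is a bit more than the paper requires. The hypothesis ``$\mathcal{L}^n\mathcal{I}g\in L^2(\R,d\mu)$ for all $n$'' is to be read classically, so smoothness of $\mathcal{I}g$ is already part of the assumption and Lemma~\ref{gen.inv} applies directly; your spectral-calculus justification is sound but not needed under this reading. The interpolation between even and odd powers via Cauchy--Schwarz is fine (one could equally use $|\lambda|^{2m+1}\leq\tfrac12(|\lambda|^{2m}+|\lambda|^{2m+2})$).
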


\begin{definition}
The Paley--Wiener space $\mathrm{PW}(\R)$ consists of all functions $f\in C^\infty(\R)$ satisfying
\begin{enumerate}
\item $(1+|\cdot|)^m\mathcal{L}^n f\in L^2(\R,d\mu)$ for all $m,n\in\mathbb{N}_0$,
\item $R_f^\mathcal{L}f:=\lim_{n\to\infty}\|\mathcal{L}^nf\|_{2,\mu}^{1/2n}<\infty$.
\end{enumerate}
In addition, let $\mathrm{PW}_R(\R):=\{f\in\mathrm{PW}(R)\,:\,R_f^\mathcal{L}=R\}$ for $R\geq 0$ and $C_R^\infty(\R)=\{g\in C_c^\infty(\R)\,:\,R_g=g\}$.
\end{definition}

\begin{theorem}
The inverse Cherednik--Opdam transform $\mathcal{I}$ is a bijection of $C_c^\infty(\R)$ onto $\mathrm{PW}(\R)$, and $\mathcal{I}$ maps $C_R^\infty(\R)$ onto $\mathrm{PW}_R(\R)$ for every $R\geq 0$.
\end{theorem}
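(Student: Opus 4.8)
The plan is to deduce the statement from Theorem~\ref{thm-PW1} by upgrading the $L^2$-theory to a smoothness/decay dictionary. Since $C_R^\infty(\R)\subseteq L_R^2(\R)$ and $\mathrm{PW}_R(\R)\subseteq\mathrm{PW}^2_R(\R)$, and $\mathcal{I}$ is already a bijection $L_R^2(\R)\to\mathrm{PW}^2_R(\R)$, it suffices to show that for $g\in L_R^2(\R)$ one has $g\in C^\infty(\R)$ if and only if $f=\mathcal{I}g$ satisfies condition~(1) in the definition of $\mathrm{PW}(\R)$, namely $(1+|\cdot|)^m\mathcal{L}^nf\in L^2(\R,d\mu)$ for all $m,n$. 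The radius is preserved automatically by Theorem~\ref{thm-PW1} and Lemma~\ref{lemma-Rg}, injectivity is inherited from $\mathcal{I}=\mathcal{H}^{-1}$, and taking the union over $R\geq0$ yields the statement for $C_c^\infty(\R)$ and $\mathrm{PW}(\R)$. Thus the whole theorem reduces to this dictionary, whose two halves I treat separately.

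For the inclusion $\mathcal{I}(C_R^\infty(\R))\subseteq\mathrm{PW}_R(\R)$, write $w(\lambda)=(1-\tfrac{\rho}{i\lambda})(8\pi|\cfct_{\alpha,\beta}(\lambda)|^2)^{-1}$, so that $\mathcal{I}h(x)=\int_\R h(\lambda)\G_\lambda(x)w(\lambda)\,d\lambda$. Using $\mathcal{L}^n(\mathcal{I}g)=\mathcal{I}\bigl((-1)^n(\cdot)^{2n}g\bigr)$ and $(\cdot)^{2n}g\in C_c^\infty(\R)$, condition~(1) reduces to the single assertion that $(1+|\cdot|)^m\mathcal{I}h\in L^2(\R,d\mu)$ for every $m\in\mathbb{N}_0$ and every $h\in C_c^\infty(\R)$. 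I would establish this by repeated integration by parts in $\lambda$: differentiating the eigenfunction relation $T^{(\alpha,\beta)}\G_\lambda=i\lambda\G_\lambda$ in $\lambda$ yields $(T^{(\alpha,\beta)}-i\lambda)\partial_\lambda\G_\lambda=i\G_\lambda$, which one reads as $\partial_\lambda\G_\lambda(x)=ix\,\G_\lambda(x)+r_\lambda(x)$ with a correction $r_\lambda$ of lower order in $x$. Each integration by parts therefore trades one factor of $x$ for one $\lambda$-derivative on $h\,w$ (no boundary terms appear, as $h$ is compactly supported), expressing $x^m\mathcal{I}h$ as a finite sum of spectral integrals $\mathcal{I}(\tilde h)$ with $\tilde h\in L^2(\R,d\nu)$ together with integrals involving the corrections $r_\lambda$; the desired $L^2(\R,d\mu)$ bound then follows from the Plancherel theorem and Lemma~\ref{lemma-G-estimate}.

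For the reverse inclusion $\mathrm{PW}_R(\R)\subseteq\mathcal{I}(C_R^\infty(\R))$, given $f\in\mathrm{PW}_R(\R)$ set $g=\mathcal{H}f$, which lies in $L_R^2(\R)$ by Theorem~\ref{thm-PW1}. I would show $g\in C^\infty(\R)$ by differentiating $\mathcal{H}f(\lambda)=\int_\R f(x)\G_\lambda(-x)A_{\alpha,\beta}(|x|)\,dx$ under the integral sign, which is legitimate once one has the $\lambda$-derivative estimate $|\partial_\lambda^m\G_\lambda(x)|\lesssim_m(1+|x|)^m\G_0(x)$, uniform for $\lambda$ in compact sets. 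Indeed, a short computation with Lemma~\ref{lemma-G-estimate}(i) shows $(1+|\cdot|)^{-N}\G_0(-\cdot)\in L^2(\R,d\mu)$ once $N$ is large, so the Cauchy--Schwarz inequality in $L^2(\R,d\mu)$, applied after absorbing enough powers of $(1+|x|)$ into $f$ by means of the rapid decay in condition~(1), gives absolute and locally uniform convergence of all $\lambda$-derivatives of $\mathcal{H}f$. Hence $g$ is smooth; being a.e.\ equal to a function supported in $[-R,R]$, it is genuinely supported there, so $g\in C_R^\infty(\R)$.

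The main obstacle, and the point where the present setting departs from the Jacobi case of \cite[Section~4]{Andersen-JMAA}, is twofold. First, one must establish the $\lambda$-derivative bounds $|\partial_\lambda^m\G_\lambda(x)|\lesssim_m(1+|x|)^m\G_0(x)$ for the non-even Opdam function $\G_\lambda=\varphi_\lambda^{(\alpha,\beta)}-(\rho-i\lambda)^{-1}\partial_x\varphi_\lambda^{(\alpha,\beta)}$, refining Lemma~\ref{lemma-G-estimate}(ii) and controlling the corrections $r_\lambda$ that appear in the forward direction. Second, the non-symmetric weight $w(\lambda)$ has a simple zero at $\lambda=0$, so that $w'/w$ is singular there; keeping track of this singularity through the integration-by-parts step---either by verifying that the products $h\,w$ remain sufficiently differentiable or by isolating a small neighbourhood of $\lambda=0$---is the delicate bookkeeping. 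An alternative that sidesteps part of this is to reduce everything to the Jacobi transform through the even/odd decomposition $\mathcal{H}f=2\mathcal{F}_{\alpha,\beta}(f_e)+2(\rho+i\lambda)\mathcal{F}_{\alpha,\beta}(Jf_o)$ and to invoke the real Paley--Wiener theorem for $\mathcal{F}_{\alpha,\beta}$ directly, at the cost of inverting this relation.
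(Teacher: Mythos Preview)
Your overall framework---reduce to Theorem~\ref{thm-PW1} and then establish the smoothness/decay dictionary in both directions---matches the paper exactly, and your treatment of the reverse inclusion $\mathrm{PW}_R(\R)\subseteq\mathcal{I}(C_R^\infty(\R))$ via differentiation under the integral sign is essentially what the paper does (in one sentence).

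The forward inclusion is where the approaches diverge. Your primary proposal is an integration-by-parts scheme in $\lambda$, trading powers of $x$ for $\lambda$-derivatives through the heuristic $\partial_\lambda\G_\lambda(x)=ix\,\G_\lambda(x)+r_\lambda(x)$. You correctly flag the two obstacles: controlling the correction $r_\lambda$ (which amounts to a Harish-Chandra-type expansion for the non-symmetric function $\G_\lambda$) and the singularity of $w'/w$ at $\lambda=0$. The paper explicitly remarks that the direct argument from \cite[Theorem~11]{Andersen-JMAA} ``does not generalize immediately to the present setting'' and does \emph{not} attempt your integration-by-parts route. Instead, it takes precisely your ``alternative'': it uses the explicit decomposition
\[\G_\lambda(x)=\varphi_\lambda^{(\alpha,\beta)}(x)+\tfrac{\rho}{4(\alpha+1)}\sinh(2x)\,\varphi_\lambda^{(\alpha+1,\beta+1)}(x)\]
to reduce everything to two applications of the known Jacobi result \cite[Theorem~11]{Andersen-JMAA}. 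The first term is handled directly. For the second, the key trick (which your proposal does not mention) is to write
\[|\cfct_{\alpha,\beta}(\lambda)|^{-2}=\frac{|\cfct_{\alpha,\beta}(\lambda)|^{-2}}{|\cfct_{\alpha+1,\beta+1}(\lambda)|^{-2}}\,|\cfct_{\alpha+1,\beta+1}(\lambda)|^{-2},\]
observe that the ratio contributes an extra $|\lambda|^{-2}$, and then apply \cite[Theorem~11]{Andersen-JMAA} with shifted parameters $(\alpha+1,\beta+1)$; the measure $A_{\alpha+1,\beta+1}(|x|)\,dx$ exactly absorbs the dangerous $\sinh(2x)$ factor. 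The asymmetric weight $(1-\rho/i\lambda)$ is dealt with by an even/odd splitting of $g$ and the observation that it is bounded for large $|\lambda|$ and, combined with $|\cfct|^{-2}$, locally integrable near $0$.

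In short: your proposal is not wrong, but the integration-by-parts approach leaves exactly the hard analytic work (uniform control of $r_\lambda$ and the $\lambda=0$ bookkeeping) undone, whereas the paper's reduction to the Jacobi case---your own stated alternative---bypasses both difficulties by leaning on a result already in the literature, at the modest cost of the $\cfct$-function ratio trick.
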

\begin{proof}
First note that $\mathcal{I}g$ belongs to $\mathrm{PW}^2_R(\R)$ whenever $g\in C_R^\infty(\R)$, according to theorem \ref{thm-PW1}. Since $\mathcal{L}^n(\mathcal{I}g)=\mathcal{I}((-1)^n(\cdot)^{2n}g)\in L^2(\R,d\mu)$ for every $n\in\mathbb{N}_0$, it remains to show that $\mathcal{I}g$ satisfies the polynomial decay condition whenever $g\in C_c^\infty(\R)$. Recall that
\[\mathcal{I}g(x)=\int_\R\G_\lambda(x)g(\lambda)\Bigl(1-\frac{\rho}{i\lambda}\Bigr)\frac{d\lambda}{8\pi|\cfct_{\alpha,\beta}(\lambda)|^2},\]
where $|\cfct_{\alpha,\beta}|^{-2}$ is the density in the symmetric Plancherel measure that appears in Jacobi analysis (and in the the proof of \cite[Theorem~11]{Andersen-JMAA}), so one must prove that
\begin{equation}\label{eqn.boundedX}
x\mapsto (1+|x|)^m\int_\R\lambda^{2n}\G_\lambda(x)g(\lambda)\Bigl(1-\frac{\rho}{i\lambda}\Bigr)\frac{d\lambda}{8\pi|\cfct_{\alpha,\beta}(\lambda)|^2}
\end{equation}
belongs to $L^2(\R,d\mu)$ for every $m,n\in\mathbb{N}_0$. While the argument given for this fact in the proof of \cite[Theorem~11]{Andersen-JMAA} does not generalize immediately to the present setting, the basic idea is still sound. The issue is that for the Jacobi case one can use that the Jacobi function $\varphi_\lambda^{(\alpha,\beta)}$ is even in $\lambda$ and allows for a Harish-Chandra series expansion adapted to the $\cfct$-function. One then proceeds with an argument involving Gangolli estimates for the coefficients in the Harish-Chandra expansion, much like in the proof for the Paley--Wiener theorem for the Jacobi transform. Due to the asymmetry in $d\nu$, such an expansion will necessarily be more complicated, albeit it is still available (as detailed in 
\cite[Section~3]{Schapira-contributions}).

Let us instead explain how to derive the required decay estimate directly from repeated use of \cite[Theorem~11]{Andersen-JMAA}: In \eqref{eqn.boundedX} we can write
\begin{multline*}
\G_\lambda(x)g(\lambda)\Bigl(1-\frac{\rho}{i\lambda}\Bigr) = \Bigl(1-\frac{\rho}{i\lambda}\Bigr)(g_e(\lambda)+g_o(\lambda))\varphi_\lambda^{(\alpha,\beta)}(x) \\
+ \frac{\rho}{4(\alpha+1)}\Bigl(1-\frac{\rho}{i\lambda}\Bigr)\sinh(2x)(g_e(\lambda)+g_o(\lambda))\varphi_\lambda^{(\alpha+1,\beta+1)}(x)\end{multline*}
where $g_e$ and $g_o$ denote the even and odd part of $g$, respectively.
This function is integrated against the symmetric Plancherel density $|\cfct_{\alpha,\beta}(\lambda)|^{-2}$, which decays according to the asymptotic estimate $|\cfct_{\alpha,\beta}(\lambda)|^{-2}\asymp|\lambda|^{2\alpha+1}$. From \cite[Theorem~11]{Andersen-JMAA} it follows that
\[x\mapsto (1+|x|)^m\int_\R\lambda^{2m}g_e(\lambda)\varphi_\lambda^{(\alpha,\beta)}(x)\,|\cfct_{\alpha,\beta}(\lambda)|^{-2}\,d\lambda\]
lies in $L^2(\R,d\mu)$ for every $m,n\geq 0$. Since $\lambda\mapsto (1-\rho/i\lambda)|\cfct_{\alpha,\beta}(\lambda)|^{-2}$ is locally integrable near $0$ and $|1-\rho/i\lambda|\lesssim 1$ for $|\lambda|\gg 1$, it follows from $\lambda\mapsto g_0(\lambda)\varphi_\lambda(x)$ being odd that the function
\[x\mapsto (1+|x|)^m\int_\R\Bigl(1-\frac{\rho}{i\lambda}\Bigr)\lambda^{2m}(g_e(\lambda)+g_0(\lambda))\varphi_\lambda^{(\alpha,\beta)}(x)\,|\cfct_{\alpha,\beta}(\lambda)|^{-2}\,d\lambda\]
also belongs to $L^2(\R,d\mu)$ for every $m,n\geq 0$. In order to deal with the remaining contribution to the integrand in \eqref{eqn.boundedX} we write
\begin{multline*}
\sinh(2x)g(\lambda)\varphi_\lambda^{(\alpha+1,\beta+1)}(x)|\cfct_{\alpha,\beta}(\lambda)|^{-2}\\ =\sinh(2x)g(\lambda)\frac{|\cfct_{\alpha,\beta}(\lambda)|^{-2}}{|\cfct_{\alpha+1,\beta+1}|^{-2}}\varphi_\lambda^{(\alpha+1,\beta+1)}(x)|\cfct_{\alpha+1,\beta+1}(\lambda)|^{-2},\end{multline*}
the point being that 
\[\biggl| \frac{|\cfct_{\alpha,\beta}(\lambda)|^{-2}}{|\cfct_{\alpha+1,\beta+1}(\lambda)|^{-2}}\biggr|\asymp \frac{|\lambda|^{2\alpha+1}}{|\lambda|^{2(\alpha+1)+1}}=|\lambda|^{-2}\]
introduces additional \emph{decay} in $|\lambda|$. An application of \cite[Theorem~11]{Andersen-JMAA} with Jacobi parameters $(\alpha+1,\beta+1)$ and associated weighted measure $d\mu_{\alpha+1,\beta+1}(x)=(\sinh x)^{2(\alpha+1)+1}(\cosh x)^{2(\beta+1)}\,dx$ guarantees that
\[\int_\R\Bigl|(1+|x|)^m\int_\R g_e(\lambda)|\lambda|^{2n}\varphi_{\alpha+1,\beta+1}(x)\,\frac{d\lambda}{|\cfct_{\alpha+1,\beta+1}(\lambda)|^2}\Bigr|^2\, A_{\alpha+1,\beta+1}(|x|)\,dx<\infty\]
which implies, in particular, that 
\[\int_\R\Bigl|(1+|x|)^m\sinh(2x)\int_\R \lambda^{2n} g_e(\lambda)\varphi_\lambda^{(\alpha+1,\beta+1)}(x)\,\frac{d\lambda}{|\cfct_{\alpha,\beta}(\lambda)|^2}\Bigr|^2\,A_{\alpha,\beta}(|x|)\,dx<\infty\]
As explained above, the factor $(1-\rho/i\lambda)$ can be included in the estimates without further issues, which concludes the proof that $\mathcal{I}g$ indeed belongs to $\mathrm{PW}(R)$.

Finally observe that a function $f\in \mathrm{PW}_R(\R)\subset\mathrm{PW}^2_\R(\R)$ has polynomial decay, so that $f$ $\curlH{f}$ is smooth. As $|\G_\lambda(x)|\lesssim (1+|x|)e^{-\rho|x|}$, it follows from theorem \ref{thm-PW1} that $\curlH{f}$ is compactly supported with $R_{\curlH{f}}=R$. This completes the proof.
\end{proof}

\providecommand{\bysame}{\leavevmode\hbox to3em{\hrulefill}\thinspace}
\providecommand{\MR}{\relax\ifhmode\unskip\space\fi MR }
\providecommand{\MRhref}[2]{%
  \href{http://www.ams.org/mathscinet-getitem?mr=#1}{#2}
}
\providecommand{\href}[2]{#2}


\begin{thebibliography}{BSOAS06}

\bibitem[AAS12]{Anker-Ayadi-Sifi}
J.-P. Anker, F.~Ayadi, and M.~Sifi, \emph{Opdam's hypergeometric functions:
  product formula and convolution structure in dimension 1}, Adv. Pure Appl.
  Math. \textbf{3} (2012), no.~1, 11--44.

\bibitem[And03]{Andersen-JMAA}
N.~B. Andersen, \emph{On real {P}aley-{W}iener theorems for certain integral
  transforms}, J. Math. Anal. Appl. \textbf{288} (2003), no.~1, 124--135.

\bibitem[And04]{Andersen-pacific}
\bysame, \emph{Real {P}aley-{W}iener theorems for the inverse {F}ourier
  transform on a {R}iemannian symmetric space}, Pacific J. Math. \textbf{213}
  (2004), no.~1, 1--13.

\bibitem[And06]{Andersen-PW-Dunkl}
\bysame, \emph{Real {P}aley-{W}iener theorems for the {D}unkl transform on
  {$\Bbb R$}}, Integral Transforms Spec. Funct. \textbf{17} (2006), no.~8,
  543--547.

\bibitem[BSOAS06]{BenSalem-JD}
N.~Ben~Salem and A.~Ould Ahmed~Salem, \emph{Convolution structure associated
  with the {J}acobi-{D}unkl operator on {$\Bbb R$}}, Ramanujan J. \textbf{12}
  (2006), no.~3, 359--378.

\bibitem[CT03]{Chettaoui-Trimeche}
C.~Chettaoui and K.~Trim{\`e}che, \emph{New type {P}aley-{W}iener theorems for
  the {D}unkl transform on {$\Bbb R$}}, Integral Transforms Spec. Funct.
  \textbf{14} (2003), no.~2, 97--115.

\bibitem[DS13]{HY-hyper}
S.~Degenfeld-Schonburg, \emph{On the {H}ausdorff-{Y}oung theorem for
  commutative hypergroups}, Colloq. Math. \textbf{131} (2013), no.~2, 219--231.

\bibitem[EK82]{Eguchi-Kumahara-Lp}
M.~Eguchi and K.~Kumahara, \emph{An {$L^p$} {F}ourier analysis on symmetric
  spaces}, J. Funct. Anal. \textbf{47} (1982), 230--246.

\bibitem[FJK73]{Tom-Mogens}
M.~Flensted-Jensen and T.~Koornwinder, \emph{The convolution structure for
  {J}acobi functions}, Ark. Mat. \textbf{11} (1973), 245--262.

\bibitem[Joh15]{Johansen-HL}
T.~R. Johansen, \emph{Hardy--{L}ittlewood inequalities for the
  {H}eckman--{O}pdam transform}, arXiv: 1501.06513v1 (2015).

\bibitem[MT14]{Mejjaoli-Trimeche}
H.~Mejjaoli and K.~Trim{\'e}che, \emph{Characterization of the support for the
  hypergeometric {F}ourier transform of {$W$}-invariant functions and
  distributions on {$\R^d$} and {R}oe's theorem}, J. Inequal. Appl. (2014),
  2014:99, 26.

\bibitem[NPP14]{Narayanan-Pasquale-Pusti}
E.~K. Narayanan, A.~Pasquale, and S.~Pusti, \emph{Asymptotics of
  {H}arish-{C}handra expansions, bounded hypergeometric functions associated
  with root systems, and applications}, Adv. Math. \textbf{252} (2014),
  227--259.

\bibitem[Opd95]{Opdam-acta}
E.~M. Opdam, \emph{Harmonic analysis for certain representations of graded
  {H}ecke algebras}, Acta Math. \textbf{175} (1995), no.~1, 75--121.

\bibitem[Pas00]{Pasquale-pacific}
A.~Pasquale, \emph{A {P}aley-{W}iener theorem for the inverse spherical
  transform}, Pacific J. Math. \textbf{193} (2000), no.~1, 143--176.

\bibitem[Sch08]{Schapira-contributions}
B.~Schapira, \emph{Contributions to the hypergeometric function theory of
  {H}eckman and {O}pdam: sharp estimates, {S}chwartz space, heat kernel}, Geom.
  Funct. Anal. \textbf{18} (2008), no.~1, 222--250.

\end{thebibliography}
\end{document}